\newtheorem{theorem}{Theorem}[section]
\newtheorem{lemma}[theorem]{Lemma}
\theoremstyle{definition}
\newtheorem{question}{Question}
\newtheorem{example}{Example}
\newtheorem{problem}{Problem}
\newtheorem*{example*}{Example}
\newcommand\op{\mathrm{op}}
\begin{document}

\title{On a system of equations with primes}
\author{Paolo Leonetti}
\address{Universit\`a Bocconi, via Sarfatti 25, 20100 Milan, Italy.}
\email{leonetti.paolo@gmail.com}

\author{Salvatore Tringali}
\address{Laboratoire Jacques-Louis Lions (LJLL),
Universit\'e Pierre et Marie Curie (UPMC), 4 place Jussieu, bo\^ite courrier
187, 75252 Paris (cedex 05), France.}
\email{tringali@ann.jussieu.fr}
\urladdr{http://www.math.polytechnique.fr/~tringali/}

\subjclass[2010]{Primary: 11A05, 11A41, 11A51, 11D61. Sec\-ond\-ar\-y: 11D79, 11R27.}
\keywords{Agoh-Giuga conjecture, cyclic congruences, Pillai's equation, prime factorization, Znam's problem.}
\begin{abstract}
\noindent{} Given an integer $n \ge 3$, let $u_1, \ldots, u_n$ be pairwise coprime integers $\ge 2$, $\mathcal D$ a family of nonempty proper subsets
of $\{1, \ldots, n\}$ with ``enough'' elements, and $\varepsilon$ a function $
\mathcal D
\to \{\pm 1\}$. Does there exist at least one prime
$q$ such that $q$ divides
$\prod_{i \in I}
u_i - \varepsilon(I)$ for some $I \in \mathcal D$, but it does not divide $u_1 \cdots u_n$? 

We
answer this question in the positive  when the $u_i$ are
prime powers and $\varepsilon$ and $\mathcal D$ are subjected to certain restrictions.
We use the result to prove that, if $\varepsilon_0 \in \{\pm 1\}$ and $A$ is a
set of three or more primes that contains all prime
divisors of any number of the form $\prod_{p \in B} p - \varepsilon_0$ for
which $B$ is a finite nonempty proper subset of $A$, then $A$ contains all the
primes.
\end{abstract}
\maketitle
\section{Introduction}

Let $\mathbb P := \{2, 3, \ldots\}$ be the set of all (positive
rational) primes. There
are several proofs of the fact that $\mathbb P$ is infinite: Some are
elementary, others come as a byproduct of deeper
results. E.g., six of them, including Euclid's classical proof, are given by M.
Aigner and
G. M. Ziegler in the first chapter of their lovely \textit{Proofs from THE
BOOK} \cite{AZ10}.
Although not really focused on the infinity of primes, this
paper is inspired by Euclid's original work on the subject, concerned as it is
with the factorization of numbers of the form $a_1 \cdots
a_n \pm 1$, where $a_1, \ldots, a_n$ are coprime positive integers, and
in fact prime powers (we do not consider $1$ as a prime power).
To be more precise, we first need to fix some notation.

We write $\mathbb Z$ for the integers, $\mathbb N$ for the nonnegative integers, and $\mathbb N^+$ for $\mathbb N \setminus \{0\}$, each of these sets being endowed with its usual addition $+$,
multiplication $\cdot$ and total order $\le$ (as
is customary, $\ge$ will stand for the dual order of $\le$).

For a
set $A$, we denote by $|A|$ the cardinality of
$A$, and by $\mathcal P_\star(A)$ the family of
all finite nonempty \textit{proper} subsets of $A$,
in such a way that $A \notin \mathcal P_\star(A)$. Furthermore, for an integer $n \ge 1$ we set $S_n:=\{1,
\ldots, n\}$ and let
$\mathcal P_n(A)$ be the collection of all subsets $B$ of $A$ with $|B| = n$.

For the notation and terminology used herein without definition, as well as for
material
concerning classical topics in number theory, the reader should refer to
\cite{Hardy08}.

With that said, we can state the basic question
addressed by the paper:
\begin{question}
\label{quest1}
Given an integer $n \ge 3$, pick exponents $v_1, \ldots, v_n \in
\mathbb N^+$ and (pairwise) distinct primes $p_1, \ldots, p_n \in \mathbb P$, and let $\mathcal D$ be a nonempty subfamily of
$\mathcal P_\star(S_n)$ with ``enough'' elements and $\varepsilon$ a map
$\mathcal P_\star(S_n)
\to \{\pm 1\}$. Does there exist at least one prime
$q \in \mathbb P \setminus \{p_1, \ldots, p_n\}$ such that $q$ divides
$\prod_{i \in I}
p_i^{v_i} - \varepsilon(I)$ for some $I \in \mathcal D$?
\end{question}
At present, we have no formal definition of what
should be meant by the word ``enough'' in the previous statement: this is
part of the question.

With the notation from above it is
rather clear, for instance, that the
answer to Question \ref{quest1} is no, at least in general, if $|\mathcal D|$ is ``small'' with respect to $n$, as shown by the following:
\begin{example}
Given an integer $k \ge 3$, distinct primes $q_1, \ldots, q_k$ and positive integers $e_1, \ldots, e_k$, let $q$ be the greatest prime dividing at least one of the numbers of the form $\prod_{i \in I} q_i^{e_i} \pm 1$ for $I \in \mathcal P_\star(S_k)$.

Then, we get a negative answer to Question \ref{quest1} by extending $q_1, \ldots, q_k$ to a sequence $q_1, \ldots, q_\ell$ containing all the primes $\le q$ (note that $\ell \ge k+1$), by taking a nonempty $\mathcal E \subseteq \mathcal P_\star(S_k)$ and arbitrary $e_{k+1}, \ldots, e_\ell \in \mathbb N^+$, and by setting $n := \ell$, $p_i := q_i$, $v_i := e_i$ and $\mathcal D := \mathcal E$.
\end{example}
Thus, to
rule
out such trivial
cases, one shall
suppose, e.g., that $|\mathcal D| \ge n \kappa$ or, in alternative, $|\mathcal
D| \ge n^{\kappa}$ for some
absolute constant $\kappa > 0$.

Specifically, we concentrate here on
the case where $\mathcal
D$ contains at least all subsets of $S_n$ of size $1$, $n-2$, or $n-1$, and the
restriction of  $\varepsilon$ to these subsets is constant (see Theorem
\ref{th:1} below), while collecting a series of intermediate results
that could be useful, in future research, to try to draw broader
conclusions.

We observe, in this sense, that Question \ref{quest1} can be
``generalized''
as follows:
\begin{question}
\label{quest2}
For an integer $n \ge 3$, let $u_1,
\ldots,
u_n$ be pairwise coprime integers $\ge 2$, $\mathcal D$ a
nonempty subcollection of $\mathcal P_\star(S_n)$ for which
$\mathcal D$ has ``enough'' elements, and $\varepsilon$ a function $\mathcal P_\star(S_n)
\to \{\pm 1\}$. Does there exist at least one
prime $q$ such that $q$ divides $\prod_{i \in I}
u_i - \varepsilon(I)$ for some $I \in \mathcal D$ and $q \nmid u_1 \cdots
u_n$?
\end{question}
Note that Question \ref{quest2} is not \textit{really} a generalization of Question
\ref{quest1}, as the former can be stated in terms of the
latter by replacing, with the same notation as above, $n$ with the total number
$d$ of the prime divisors of $u_1 \cdots u_n$ and $\mathcal D$ with a
suitable subfamily of $\mathcal P_\star(S_d)$.

Questions  \ref{quest1} and \ref{quest2} are somewhat reminiscent of cyclic
systems of simultaneous congruences, studied by several authors, and still in
recent years, for their
connection with some long-standing questions in the theory of numbers, and
especially Zn\'am's problem and the Agoh-Giuga conjecture (see
\cite{Brenton02} and \cite{Laga010},
respectively, and references therein).

Our
initial motivation has been, however, of a completely different sort, and in
fact related to the
following:
\begin{problem}\label{ROMO_problem}
Let $A$ be a subset of $\mathbb P$, having at least three elements, and such
that for any $B \in \mathcal P_\star(A)$ all prime
divisors
of $\prod_{p \in B} p - 1$ belong to $A$. Then $A = \mathbb P$.
\end{problem}
This served as a problem in the 4th grade of the 2003
Romanian IMO Team Selection Test, and it appears (up to minor notational
differences) as Problem 10 in \cite[p. 53]{Vornicu03}. The solution
provided in the book (p. 62) consists of two parts. In the first one, the authors aim to show that $A$ is infinite, but their argument is seen to be at least incomplete.
Specifically, their argument is as follows (we use the notation from above):

After having proved that $2$ is in $A$, they suppose by contradiction that $A$ is a finite set of size $k$ (where $k \ge 3$) and let $p_1, \ldots, p_k$ be a numbering of $A$ such that $2 = p_1 < \cdots < p_k$.

Then, they derive from the standing assumptions on $A$ that
\begin{displaymath}
p_2^\alpha + 1 = 2^{\beta+1} p_2^\gamma + 2
\end{displaymath}
for some $\alpha, \beta,\gamma \in \mathbb{N}$. But this does
not imply $1 \equiv 2 \bmod
{p_2}$ (as is stated in the book) unless
$\gamma \ne 0$, which is nowhere proved and has no obvious reason to be true.

The problem \textit{per se} is not, however, difficult, and it was used also
for the 2004 France IMO Team Selection Test (we are not aware of any official
solution published by the organizers of the competition).

Questions somewhat similar to those above have been considered by other
authors, even though under different assumptions, and mostly
focused on the properties
of the prime factorization of particular sequences (of integers) $a_0, a_1,
\ldots$ recursively defined, e.g., by formulas of the form
$a_{n+1} = 1 + a_0
\cdots a_n$; see \cite[\S{}1.1.2]{Narkie} and the references therein for an
account (for all practical purposes, we notice here that one of the questions raised by A. A. Mullin in \cite{Mullin63} and mentioned by W. Narkiewicz on page 2 of his book has been recently answered in \cite{Book12}).

Now, we have not been able to work out a complete solution of Question
\ref{quest1}, whatever this may be.
Instead, as already remarked, we solve it in some special cases. This is in fact the content of the following theorem, which is also the main result of the paper:
\begin{theorem}\label{th:1}
Given an integer $n \ge 3$, pick distinct
primes $p_1, \ldots, p_n$, exponents $v_1,
\ldots, v_n \in
\mathbb N^+$ and a subcollection $\mathcal D$ of $\mathcal P_\star(S_n)$
such that $\mathcal D_0 \subseteq \mathcal D$, where
\begin{displaymath}
\mathcal D_0 := \mathcal
P_1(S_n) \cup \mathcal P_{n-2}(S_n) \cup \mathcal
P_{n-1}(S_n).
\end{displaymath}
Then, for
every function $\varepsilon: \mathcal P_\star(S_n) \to \{\pm 1\}$ such that
the restriction
of $\varepsilon$ to $\mathcal D_0$ is constant, there exists at least one
$q \in \mathbb P \setminus \{p_1, \ldots, p_n\}$ such that $q$ divides
$\prod_{i \in I}
p_i^{v_i} - \varepsilon(I)$ for some $I \in \mathcal D$.
\end{theorem}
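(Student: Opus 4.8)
The plan is to argue by contradiction, using only the subfamily $\mathcal D_0 \subseteq \mathcal D$, on which $\varepsilon$ is the constant $\varepsilon_0 \in \{\pm 1\}$. Writing $a_i := p_i^{v_i}$ and $P := a_1 \cdots a_n$, I would assume that every prime dividing $\prod_{i \in I} a_i - \varepsilon_0$, for $I \in \mathcal D_0$, already lies in $\{p_1, \ldots, p_n\}$, and aim to derive an absurdity. The first, easy step disposes of the case in which all the $p_i$ are odd: then $a_1 - \varepsilon_0$ is even and $\ge 2$ (using the singleton $\{1\} \in \mathcal P_1(S_n)$), so $q = 2$ is a prime outside $\{p_1, \ldots, p_n\}$ dividing it, a contradiction. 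Hence I may assume $p_1 = 2$.

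Next I would harvest the arithmetic consequences of the contradiction hypothesis at each of the three prescribed levels. For $I = S_n \setminus \{j\}$ of size $n-1$, every $p_i$ with $i \ne j$ divides $P/a_j$, so the only admissible prime factor of $P/a_j - \varepsilon_0$ is $p_j$; since $P/a_j \ge 2^{\,n-1} \ge 4$, this forces
\[
P/a_j - \varepsilon_0 = p_j^{w_j}, \qquad w_j \ge 1 .
\]
Equivalently $P = a_j(\varepsilon_0 + p_j^{w_j})$, whence $p_j^{w_j} \equiv -\varepsilon_0 \pmod{p_k}$ for every $k \ne j$, and, comparing the two expressions of $P$ attached to indices $i$ and $j$, $\varepsilon_0(a_i - a_j) = p_j^{v_j+w_j} - p_i^{v_i+w_i}$. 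Analogously, for $I = S_n \setminus \{j,k\}$ of size $n-2$ I get $P/(a_j a_k) - \varepsilon_0 = p_j^{s} p_k^{t}$ with $s,t \ge 0$, and the singletons $\{i\} \in \mathcal P_1(S_n)$ tell me that all prime factors of $a_i - \varepsilon_0$ lie in $\{p_\ell : \ell \ne i\}$ (so $2 \mid a_i - \varepsilon_0$ for $i \ge 2$, while $a_1 - \varepsilon_0$ is odd).

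The engine of the argument is the comparison of the two consecutive levels $n-1$ and $n-2$: forming the quotient $(P/a_j)/(P/(a_j a_k)) = a_k$ gives $a_k(\varepsilon_0 + p_j^{s} p_k^{t}) = \varepsilon_0 + p_j^{w_j}$, that is
\[
\varepsilon_0(a_k - 1) = p_j^{w_j} - p_j^{s}\, p_k^{v_k + t} .
\]
I would run this through a $2$-adic analysis (taking $j = 1$, so $p_1 = 2$) together with the $p$-adic valuations at the odd $p_k$, and feed in the singleton constraints, so as to pin down the exponents $s,t$ and collapse the system to a bounded number of exponential Diophantine equations of Pillai type, $2^{x} - p^{y} = c$ and $p^{x} - q^{y} = c$.

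The hard part is exactly this last stage. Ruling things out pairwise is \emph{not} enough: the two-variable specialization already admits spurious solutions, since the equation attached to the pair $\{2,5\}$ (namely $2^{x} - 5^{y} = 5 - 2$, coming from the comparison relation with $\varepsilon_0 = 1$ and unit exponents) is solved by $2^{7} - 5^{3} = 3$. Thus the contradiction cannot be localized to a single pair of primes, and the genuine input must come from using $n \ge 3$ primes simultaneously together with all three prescribed subset-sizes at once. I expect to close the argument by combining bounds for Pillai's equation (or Catalan's theorem in the borderline cases $p^{y} - 2^{x} = \pm 1$) with the rigidity imposed by the simultaneous congruences $p_j^{w_j} \equiv -\varepsilon_0 \pmod{p_k}$. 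Carrying out this elimination uniformly in $n$ and in $\varepsilon_0 \in \{\pm 1\}$ is where I anticipate the main technical difficulty to lie.
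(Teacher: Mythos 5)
Your reduction is correct as far as it goes, and it is in fact the paper's own starting point: the contradiction hypothesis, the forcing of $p_1 = 2$, and the three families of identities you extract from the singletons, the $(n-1)$-sets and the $(n-2)$-sets (including the quotient relation $a_k(\varepsilon_0 + p_j^{s}p_k^{t}) = \varepsilon_0 + p_j^{w_j}$) are precisely \eqref{equ:id1}--\eqref{equ:id4} of Section \ref{sec:preparations}. But everything you defer to the ``last stage'' \emph{is} the theorem, and the route you propose for it --- effective bounds for Pillai-type equations, with Catalan in borderline cases --- is exactly the route the authors consider and discard: the finiteness results of \cite{Bugeaud06} for equation \eqref{equ:id5} are ineffective, no complete list of solutions is known even in the special case $A=B=1$, $b=2$, and there is no evident way to adapt those methods to the system arising from \eqref{equ:id4}. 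Since $n$, the primes $p_i$ and all the exponents $v_i, w_j, s, t$ are unconstrained, your hoped-for collapse to ``a bounded number of exponential Diophantine equations'' is not available, and nothing in your sketch shows that the congruences $p_j^{w_j} \equiv -\varepsilon_0 \bmod p_k$ eliminate the spurious solutions you yourself exhibit. So there is a genuine gap: the entire contradiction mechanism is missing.

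What actually closes the argument in the paper is a trio of ideas absent from your proposal. (i) Residue arguments force $3 \in \{p_1, \ldots, p_n\}$ (Lemma \ref{lem:p2=3}): if not, then for $\varepsilon_0 = 1$ a computation mod $3$ using the singletons and an even-sized set of your level-$(n-1)$/$(n-2)$ type gives a contradiction (this is the one place where having \emph{both} sizes $n-2$ and $n-1$ is used, since one of them is even), while for $\varepsilon_0 = -1$ all $v_i$ turn out even, so $p_i^{\alpha_i} = P_{-i}+1$ is a square plus one, forcing $p_i \equiv 1 \bmod 4$ for $i \ge 2$ and then $2^{\alpha_1} = P_{-1}+1 \equiv 2 \bmod 4$, absurd. (ii) The case $\varepsilon_0 = +1$ is then eliminated outright (Lemmas \ref{lem:fermat} and \ref{lem:neven}): each $p_j$ with $j \ge 3$ must be a Fermat prime, and $p_j \mid 2^{\alpha_1}+1$ with $\alpha_1$ odd forces ${\rm ord}_{p_j}(2) = 2$, i.e.\ $p_j \mid 3$, impossible. (iii) For $\varepsilon_0 = -1$ the decisive tool is Zsigmondy's theorem, packaged as Lemma \ref{lem:sort_of_diophantine}: $q^x - 1 = p^y(q^z-1)$ with $p \mid q+1$ and $y \ge 2$ forces $p = 2$. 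Applied with $p = 3$ to your quotient relation (taking $a_k = 3^{v_2}$ and an index $j$ with $p_j \equiv -1 \bmod 3$), it shows the exponent of $3$ on the $(n-2)$-level is positive; reducing the companion relation mod $3$ then makes $v_j$ even for every $p_j \equiv -1 \bmod 3$, whence $P_{-2} \equiv 1 \bmod 3$, contradicting $3^{\alpha_2} = P_{-2}+1$. Without these steps, or genuine substitutes for them, your text is an accurate reformulation of the problem rather than a proof of it.
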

The proof of Theorem \ref{th:1}, as presented in Section \ref{sec:th1}, requires a number of preliminary lemmas,
which are stated and proved under assumptions much weaker than those in the
above statement.

In particular, we will make use at some point of the following
result \cite{Zsig}:
\begin{theorem}[Zsigmondy's theorem]
\label{th:zsigmondy}
Pick $a,b \in \mathbb N^+$ and an integer $n \ge 2$ such that (i) $a > b$ and
(ii) neither $(a,b,n) = (2,1,6)$ nor $a+b$ is a power of $2$ and $n = 2$. Then,
there exists a prime $p$ such
that $p \mid a^n-b^n$ and $p \nmid a^k-b^k$ for each positive integer $k < n$.
\end{theorem}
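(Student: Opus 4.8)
The plan is to argue by contradiction. Suppose the conclusion fails. Since $\mathcal D_0 \subseteq \mathcal D$ and $\varepsilon$ is constantly equal to some $\varepsilon_0 \in \{\pm 1\}$ on $\mathcal D_0$, this means that for every $I \in \mathcal D_0$ the only primes dividing $\prod_{i \in I} p_i^{v_i} - \varepsilon_0$ belong to $P := \{p_1, \dots, p_n\}$. As $\prod_{i \in I} p_i^{v_i}$ and $\prod_{i \in I} p_i^{v_i} - \varepsilon_0$ differ by $\varepsilon_0 = \pm 1$, they are coprime, so in fact the prime divisors of the latter lie among $\{p_j : j \notin I\}$. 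A first, cheap reduction disposes of the case $2 \notin P$: if all the $p_i$ are odd then each $\prod_{i \in I} p_i^{v_i} - \varepsilon_0$ is even, whence $q = 2 \in \mathbb P \setminus P$ already works. So I may assume the primes labelled so that $2 = p_1 < p_2 < \cdots < p_n$, and set $N := \prod_{i=1}^n p_i^{v_i}$.

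Next I would convert the three layers of $\mathcal D_0$ into a Diophantine system. For each $j$ the $(n-1)$-subset $S_n \setminus \{j\}$ forces $N/p_j^{v_j} - \varepsilon_0$ to be a power of $p_j$, say $\prod_{i \ne j} p_i^{v_i} = p_j^{\alpha_j} + \varepsilon_0$ with $\alpha_j \in \mathbb N$; since $n \ge 3$ the product $\prod_{i\ne j} p_i^{v_i}$ has at least two prime factors and so exceeds $1 + \varepsilon_0$, ruling out $\alpha_j = 0$ and yielding $\alpha_j \ge 1$. This is the congruence system $\prod_{i \ne j} p_i^{v_i} \equiv \varepsilon_0 \pmod{p_j}$ for all $j$, a cyclic-congruence condition of exactly the Giuga/Zn\'am flavour alluded to in the introduction. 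Each $(n-2)$-subset $S_n \setminus \{j,k\}$ gives $\prod_{i \ne j,k} p_i^{v_i} = p_j^{\beta_{jk}} p_k^{\gamma_{jk}} + \varepsilon_0$, and each singleton gives $p_j^{v_j} - \varepsilon_0 = \prod_{k \ne j} p_k^{e_{jk}}$. Multiplying the $(n-2)$-relation for $\{j,k\}$ by $p_k^{v_k}$ and comparing with the $(n-1)$-relation for $j$ produces the key link between the exponents,
\[
p_j^{\alpha_j} - p_j^{\beta_{jk}}\, p_k^{\,v_k + \gamma_{jk}} = \varepsilon_0\bigl(p_k^{v_k} - 1\bigr), \qquad j \ne k,
\]
which ties each $\alpha_j$ to the multiplicative orders of the $p_i$ modulo powers of the other primes.

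The heart of the argument, and where Zsigmondy's theorem enters, is an extremal analysis at the largest prime $p_n$. Applying Theorem \ref{th:zsigmondy} to numbers of the form $p_n^{\,t} - 1$ (respectively $p_j^{\,t} - 1$) that surface on the right-hand side of the displayed identity and in the singleton relation $p_n^{v_n} - \varepsilon_0 = \prod_{i<n} p_i^{e_{ni}}$ produces a primitive prime divisor $q$, i.e.\ one with $\operatorname{ord}_q(p_n) = t$ and hence $q \equiv 1 \pmod t$. The plan is to show that the congruence system together with the exponent identities forces the relevant order $t$ to be so large that any primitive prime it produces must exceed $p_n$, so that $q \in \mathbb P \setminus P$ — the desired contradiction. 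The finitely many exceptional cases of Zsigmondy, namely $n = 2$ with $a+b$ a power of $2$ and $(a,b,n) = (2,1,6)$, would be handled separately as a short finite check, using that $p_1 = 2$.

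The main obstacle is precisely this last step: neither the size of the primes nor any single relation on its own forces a prime outside $P$, since a primitive prime divisor of $p_n^{v_n} - 1$ may perfectly well coincide with one of the smaller $p_i$. What must be exploited is the simultaneous rigidity of \emph{all} the $(n-1)$- and $(n-2)$-relations — the congruences $\prod_{i\ne j} p_i^{v_i} \equiv \varepsilon_0 \pmod{p_j}$ for every $j$, together with the purity of $N/p_j^{v_j} - \varepsilon_0$ as a power of $p_j$ — to pin down the exponents $\alpha_j$, $\beta_{jk}$, $\gamma_{jk}$, $v_j$ tightly enough that Zsigmondy yields a genuinely new prime. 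Carrying this out uniformly in $n$, and separating the $\varepsilon_0 = +1$ and $\varepsilon_0 = -1$ cases (which behave differently modulo small primes), is where the real work lies, and is presumably why the theorem is assembled from several preliminary lemmas rather than proved in a single stroke.
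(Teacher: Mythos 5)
Your proposal does not prove the statement it was supposed to prove. The statement is Zsigmondy's theorem: for $a > b \ge 1$ and $n \ge 2$, outside the two listed exceptional cases, $a^n - b^n$ admits a prime divisor dividing no $a^k - b^k$ with $0 < k < n$. This is a classical result (Zsigmondy, 1892) which the paper does not prove at all: it is quoted from the literature (the reference \cite{Zsig}) and used as a black box. What you have written instead is a sketch of the paper's \emph{main} result, Theorem \ref{th:1} --- your contradiction hypothesis, the family $\mathcal D_0$, the sign $\varepsilon_0$, and the primes $p_1 < \cdots < p_n$ all come from that statement, not from Zsigmondy's. Worse, your sketch explicitly \emph{invokes} Theorem \ref{th:zsigmondy} partway through (``Applying Theorem \ref{th:zsigmondy} to numbers of the form $p_n^t-1$\dots''), so read as a proof of Zsigmondy's theorem it is circular on its face.

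Even judged charitably as an attempt at Theorem \ref{th:1}, the argument is incomplete: you concede in your final paragraph that the decisive step --- showing that the primitive prime divisor produced by Zsigmondy must lie outside $\{p_1,\ldots,p_n\}$ --- is not carried out, and indeed this is exactly the point where a naive extremal analysis at the largest prime breaks down, since a primitive prime divisor of $p_n^{v_n}-1$ can perfectly well be one of the smaller $p_i$. The paper's actual proof of Theorem \ref{th:1} avoids this: it first pins down $p_1 = 2$, $p_2 = 3$, $v_2 \ge 2$ (Lemmas \ref{lem:p1=2} and \ref{lem:p2=3}), eliminates the case $\varepsilon_0 = 1$ by Lemma \ref{lem:neven}, and then, for $\varepsilon_0 = -1$, derives the identities \eqref{equ:dioph1} and \eqref{equ:dioph2} for any prime $p_{i_0} \equiv -1 \bmod 3$ and feeds them into Lemma \ref{lem:sort_of_diophantine} (the only place Zsigmondy enters), concluding that every such prime has even exponent $v_i$; reducing \eqref{equ:id2} modulo $3$ then yields $2 \equiv 0 \bmod 3$, a contradiction. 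If your task really is Zsigmondy's theorem itself, you need a different argument altogether, e.g.\ the standard one via cyclotomic polynomials: one shows that the homogeneous cyclotomic value $\Phi_n(a,b)$ divides $a^n - b^n$, that any prime dividing both $\Phi_n(a,b)$ and some $a^k - b^k$ with $k < n$ must divide $n$ and can divide $\Phi_n(a,b)$ only to the first power, and finally that $\Phi_n(a,b)$ is too large to consist of that single prime factor except in the stated exceptional cases.
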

Theorem \ref{th:1} can be used to solve a generalization of Problem
\ref{ROMO_problem}, for which we need to introduce some more notation.

Specifically, for $B,C \subseteq \mathbb Z$ we write
$B \perp C$ if for every $b
\in B$ there exists
$c \in C$ such that $b \mid c$; this simplifies to
$b \perp C$ when $B =
\{b\}$. It is clear that $B \perp C$ if and only if $b \perp C$ for
all $b \in B$.

Based on these premises, we then prove the
following:
%
\begin{theorem}\label{th:2}
Pick $\varepsilon_0 \in \{\pm 1\}$ and let $A$ be a
set of prime powers with the property that $|A| \ge 3$ and $q \perp A$ whenever $q$
is a prime dividing $\prod_{a \in B}
a - \varepsilon_0$ for some
$B \in \mathcal P_\star(A)$.
Then, $A$ is infinite. Also, $\mathbb P \perp A$
if $\varepsilon_0 = 1$. Finally, $A = \mathbb P$ if $A \subseteq \mathbb P$.
\end{theorem}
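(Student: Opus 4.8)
The plan is to prove the three assertions in turn, writing $P(A)$ for the set of primes dividing some element of $A$, so that the relation $q \perp A$ is nothing but $q \in P(A)$ (recall every $a \in A$ is a prime power), and setting $d := |P(A)|$.

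First I would show that $A$ is infinite by proving $d = \infty$, i.e.\ by excluding every finite value of $d$. The cases $d \le 2$ are elementary: if $P(A) = \{p\}$ or $P(A)=\{p,p'\}$, take $B \in \mathcal P_\star(A)$ to be a singleton $\{a\}$, respectively a pair $\{a,a'\}$ with $a,a'$ powers of the two distinct primes (such $B$ is proper since $|A|\ge 3$). Then $\prod_{a \in B} a - \varepsilon_0$ is an integer $\ge 2$ congruent to $-\varepsilon_0 \not\equiv 0$ modulo each prime of $P(A)$, hence it has a prime factor outside $P(A)$, contradicting the hypothesis. For finite $d = k \ge 3$ I would invoke Theorem \ref{th:1}: fixing a transversal $a_1 = p_1^{v_1}, \ldots, a_k = p_k^{v_k} \in A$ (one element per prime of $P(A)$), the constant map $\varepsilon \equiv \varepsilon_0$, and $\mathcal D := \mathcal P_\star(S_k) \supseteq \mathcal D_0$, the theorem returns a prime $q \notin \{p_1,\ldots,p_k\} = P(A)$ dividing $\prod_{i \in I} a_i - \varepsilon_0$ for some $I \in \mathcal P_\star(S_k)$. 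Then $B := \{a_i : i \in I\}$ is a finite nonempty proper subset of $A$ with $q \mid \prod_{a \in B} a - \varepsilon_0$, so the hypothesis forces $q \in P(A)$, a contradiction. Hence $d = \infty$ and $A$ is infinite.

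For the second assertion, suppose $\varepsilon_0 = 1$ and fix a prime $r \notin P(A)$; then every $a \in A$ is a unit modulo $r$. Listing distinct elements $a_1, a_2, \ldots$ of the (now infinite) set $A$, the partial products $\prod_{i \le t} a_i \bmod r$ lie in the finite set $(\mathbb Z/r\mathbb Z)^\times$, so two of them coincide; their quotient gives $B \in \mathcal P_\star(A)$ (proper because $A$ is infinite) with $\prod_{a \in B} a \equiv 1 \pmod r$, i.e.\ $r \mid \prod_{a \in B} a - 1$. The hypothesis then forces $r \perp A$, contradicting $r \notin P(A)$; thus $P(A) = \mathbb P$, which is exactly $\mathbb P \perp A$.

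Finally, if $A \subseteq \mathbb P$ then $P(A) = A$, so for $\varepsilon_0 = 1$ the previous paragraph already gives $A = P(A) = \mathbb P$, and it remains to treat $\varepsilon_0 = -1$. This is the main obstacle, since pigeonholing only produces subset products $\equiv 1 \pmod r$, whereas now I need one $\equiv -1$. Fix a prime $r \notin A$ (note $r$ is odd, as a singleton shows $2 \in A$) and put $G := \{\prod_{p \in B} p \bmod r : B \in \mathcal P_\star(A)\} \subseteq (\mathbb Z/r\mathbb Z)^\times$. The crux is the lemma that $G$ is a \emph{subgroup}: by pigeonhole some residue $c$ is attained by infinitely many primes of $A$, and padding any $B$ with disjoint such primes shows $G\langle c\rangle = G$, so $G$ is a union of $\langle c\rangle$-cosets whose image in the finite group $(\mathbb Z/r\mathbb Z)^\times/\langle c\rangle$ is a subsemigroup, hence a subgroup; therefore $G$ itself is a subgroup, and in particular every residue in $G$ is realizable as a product over \emph{distinct} primes of $A$. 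I would then use the hypothesis as a closure under $+1$: given $x \in G$ with $x \neq -1$, realize $x = \prod_{p \in B} p \bmod r$; since $\prod_{p \in B} p + 1 \ge 3$ is $\equiv x + 1 \not\equiv 0 \pmod r$ and all its prime factors lie in $A$, their residues lie in $G$, whence $x + 1 \in G$. Starting from $1 \in G$ and iterating yields $2, 3, \ldots, r-1 \in G$, in particular $-1 \in G$; realizing $\prod_{p \in B} p \equiv -1 \pmod r$ gives $r \mid \prod_{p \in B} p + 1 = \prod_{p \in B} p - \varepsilon_0$, so the hypothesis forces $r \in A$, contradicting $r \notin A$. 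Hence $A = \mathbb P$. The delicate point throughout this last step is precisely the subgroup lemma, which is what guarantees that the residues reached by the inductive $+1$-closure stay genuinely realizable, so that a multiplicative subgroup closed under adding $1$ is forced to be all of $(\mathbb Z/r\mathbb Z)^\times$.
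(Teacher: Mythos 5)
Your Parts 1 and 2 are correct. Part 1 is a mild variant of the paper's Lemma \ref{lem:oo}: the paper supposes $A$ finite and first shows the underlying primes are pairwise distinct via a unique-factorization argument, while you work with the set $P(A)$ of underlying primes, dispose of $|P(A)|\le 2$ by hand, and feed a transversal into Theorem \ref{th:1}; both routes are sound (one trivial slip: in the case $|P(A)|=1$ with $\varepsilon_0=1$ you must choose the singleton $\{a\}$ with $a\ne 2$, since $2-\varepsilon_0=1$ has no prime factor). Part 2 replaces the paper's pigeonhole-on-residue-classes plus Fermat's little theorem by a pigeonhole on partial products; equally correct and of the same difficulty.

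Part 3, however, has a genuine gap, and it sits exactly where you locate the crux: the claim that $G$ is a subgroup. Your padding argument proves only that $G\langle c\rangle = G$, i.e.\ that $G$ is a union of $\langle c\rangle$-cosets; it does \emph{not} prove that the image of $G$ in $(\mathbb Z/r\mathbb Z)^\times/\langle c\rangle$ is closed under multiplication, and no justification for that closure is offered. The obstruction is that an element of $A$ may be used at most once in a product $\prod_{p\in B}p$: given $g_1=\bigl[\prod_{p\in B_1}p\bigr]$ and $g_2=\bigl[\prod_{p\in B_2}p\bigr]$ with $B_1\cap B_2\neq\emptyset$, the set $B_1\cup B_2$ realizes $g_1g_2\bigl[\prod_{p\in B_1\cap B_2}p\bigr]^{-1}$, and there is no reason the correction factor should lie in $\langle c\rangle$. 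One can re-realize $g_2$ disjointly from $B_1$ only by swapping each prime of $B_2$ for a fresh prime of $A$ in the same residue class, which is possible precisely when those classes contain infinitely many elements of $A$; primes of $A$ lying in \emph{finite} residue classes admit no substitutes, and nothing in the hypotheses excludes them. This is exactly the difficulty the paper's proof of Lemma \ref{lem:eps0=-1final} is engineered to bypass: it partitions $A$ into $A_{\mathrm{fin}}$ (elements in finite classes) and $A_{\mathrm{inf}}$, sets $\xi_0:=\prod_{a\in A_{\mathrm{fin}}}a$, and recursively builds products $\varrho_n$ of distinct elements of $A$ with $\xi_0\mid\varrho_n$, so that $1+\varrho_n$ is coprime to $\xi_0$ and all its prime factors fall in $A_{\mathrm{inf}}$, where they can be replaced by fresh distinct elements. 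Note also that repairing your argument by restricting to products over $A_{\mathrm{inf}}$ (where the disjoint-realization subgroup claim does hold) does not rescue the simple $1,2,\ldots,r-1$ iteration: the $+1$-closure then survives only in the twisted form $x\mapsto[\xi_0]x+1$, whose orbit need not sweep out the whole group, and one is pushed toward the paper's geometric-sum computation $\sum_{i=0}^{r(r-1)-1}\varrho_0^i\equiv 0 \bmod r$ to guarantee that some admissible product is $\equiv -1$. In short, Parts 1--2 stand, but as written the subgroup lemma is unproven, and filling it essentially forces the paper's $A_{\mathrm{fin}}$/$A_{\mathrm{inf}}$ machinery.
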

Theorem \ref{th:2} is proved in Section \ref{sec:th2}.
Incidentally, the result gives a solution of Problem
\ref{ROMO_problem} in the special case where $\varepsilon_0 = 1$ and $A
\subseteq \mathbb P$, while providing another proof, although overcomplicated,
of the infinitude of primes.

The conclusions of Theorem \ref{th:2} leads to the following:
\begin{question}
\label{quest3}
Let $\mathfrak P$ be an infinite set of primes. Does there exist a set of prime powers, say $A$, such
that $q \perp A$ for some $q \in \mathbb P$ if and only if $q$ is a prime divisor of $\prod_{a \in B} a + 1$ for some $B \in \mathcal P_\star(A)$ and $q \in \mathfrak P$? If not, what about a ``non-trivial'' characterization of those $\mathfrak P$ for which this happens?
\end{question}

Another question along the same lines is as follows:

\begin{question}
\label{quest3b}
Let $\mathfrak P$ be an infinite set of primes and pick $\varepsilon_0 \in \{\pm 1\}$. Does there
exist a set $A$ of prime powers such that $q \in \mathfrak P$ if and only if $q$ is a prime divisor of $\prod_{a \in B} a - \varepsilon_0$ for some $B \in \mathcal P_\star(A)$? If not, can we provide a ``non-trivial'' characterization of those $\mathfrak P$ for which this is true?
\end{question}

Both of these questions are almost completely open to us. Two related (but easier) questions are answered by Examples \ref{rem:2} and \ref{rem:1} in Section \ref{sec:th2}.

\section{Preparations}\label{sec:preparations}
Here below, we fix some more notation and prove a few
preliminary lemmas related to  Question \ref{quest1} in its full generality
(that is, the analysis is not restricted to the special cases covered by Theorem \ref{th:1}).

For any purpose it may serve, we recall from the introduction that, in our notation, $0 \in \mathbb N$ and $\emptyset, A \notin \mathcal P_\star(A)$ for any set $A$.

In the remainder of this section, we suppose that there exist an integer $n \ge 3$, a set $\mathfrak P = \{p_1, \ldots, p_n\}$ of $n$ primes,
integral exponents $v_1, \ldots, v_n \in \mathbb N^+$, a nonempty subfamily
$\mathcal D$ of $\mathcal P_\star(S_n)$, and a map $\varepsilon: \mathcal P_\star(S_n)
 \to \{\pm 1\}$ such that $p_1 < \cdots < p_n$ and $q \in \mathfrak P$
whenever $q \in \mathbb P$ and $q$ divides $\prod_{i \in I} p_i^{v_i} -
\varepsilon_I$ for some $I \in \mathcal D$, where $\varepsilon_I := \varepsilon(I)$ for economy of notation.

Accordingly, we show that these assumptions lead to a contradiction if $\mathcal D$
contains some distinguished
subsets of $S_n$ and the restriction of $\varepsilon$ to the subcollection of these sets, herein denoted by $\mathcal D_0$, is constant: This is especially the case when $\mathcal D_0 = \mathcal
P_1(S_n) \cup \mathcal P_{n-2}(S_n) \cup \mathcal P_{n-1}(S_n)$.

We let $P :=
\prod_{i=1}^n p_i^{v_i}$ and $\mathcal D^\op := \{S_n \setminus I: I
\in \mathcal D\}$,
and then for each $I \in \mathcal P_\star(S_n)$ we define
\begin{displaymath}
 P_I:= \prod_{i \in  I} p_i^{v_i}, \quad P_{-I}:=
P_{S_n \setminus I} \quad\text{and}\quad \varepsilon_{-I} := \varepsilon_{S_n
\setminus I}
\end{displaymath}
(notice that $P = P_I \cdot P_{-I}$).
In particular, given $i \in S_n$ we write
$P_i$ for $P_{\{i\}}$ and $P_{-i}$ for $P_{-\{i\}}$, but also
$\varepsilon_i$ for $\varepsilon_{\{i\}}$ and $\varepsilon_{-i}$ for
$\varepsilon_{-\{i\}}$.

It then follows from our assumptions that
there are maps $\alpha_1, \ldots, \alpha_n: \mathcal P_\star(S_n) \to \mathbb
N$ such that
\begin{equation}
\label{equ:id1}
P_{-I} = \varepsilon_{-I} + \prod_{i \in
I} p_i^{\alpha_{i,I}}\ \text{ for every } I \in \mathcal D^\op,
\end{equation}
where $\alpha_{i,I} := \alpha_i(I)$. Thus, if there exists
$i \in S_n$ such that $\{i\} \in \mathcal D^\op$ then
\begin{equation}
\label{equ:id2}
P_{-i} = p_i^{\alpha_i} + \varepsilon_{-i},\quad\text{with}\quad\alpha_i :=
\alpha_{i,\{i\}} \in \mathbb N^+
\end{equation}
(of course, $\alpha_i \ge 1$ since $P_{-i} - \varepsilon_{-i} \ge 2 \cdot 3 -
1$). This in turn implies that
\begin{equation}
\label{equ:id3}
P = P_{I_1} \cdot
\left(\varepsilon_{-I_1} + \prod_{i \in I_1} p_i^{\alpha_{i,I_1}}\right) = P_{I_2}
\cdot \left(\varepsilon_{-I_2} + \prod_{i \in I_2} p_i^{\alpha_{i,I_2}}\right),
\end{equation}
for all $I_1, I_2 \in \mathcal D^\op$, which specializes to:
\begin{equation}
\label{equ:id4}
 P = p_{i_1}^{v_{i_1}} \cdot \left(p_{i_1}^{\alpha_{i_1}} +
\varepsilon_{-i_1}\right) = p_{i_2}^{v_{i_2}}\cdot \left(p_{i_2}^{\alpha_{i_2}} + \varepsilon_{-i_2}\right)
\end{equation}
for all $i_1, i_2 \in S_n$ such that $\{i_1\}, \{i_2\} \in \mathcal D^\op$.

We mention in this respect that, for any fixed integer $b \ne 0$ and any finite subset $\mathcal S$ of $\mathbb P$, the diophantine equation
\begin{equation}
\label{equ:id5}
A \cdot (a^{x_1} - a^{x_2}) = B \cdot (b^{y_1} - b^{y_2})
\end{equation}
has only finitely many solutions in \textit{positive} integers $a,A,B,x_1, x_2, y_1, y_2$  for which $a$ is a prime, $\gcd(Aa,Bb) = 1$, $x_1 \ne x_2$ and all the prime factors of $AB$ belong to $\mathcal S$; see \cite{Bugeaud06} and the references therein. It follows that our equation \eqref{equ:id4} has only finitely many possible scenarios for $\varepsilon$ taking the constant value $-1$ in $\mathcal D$.

However, the methods used in \cite{Bugeaud06} are not effective and, as far as we can tell, a list of all the solutions to equation \eqref{equ:id5} is not known, not even in the special case when $A = B = 1$ and $b = 2$. Furthermore, there does not seem to be any obvious way to adapt the proof of the main result in \cite{Bugeaud06} to cover all of the cases resulting from equation \eqref{equ:id4}.

With this in mind, and based
on \eqref{equ:id1}, our main
hypothesis can be now restated as
\begin{equation}
\label{equ:hypo}
\text{``}q \mid P_{-I} - \varepsilon_{-I}\ \text{for some }q \in \mathbb P \text{ and }I \in
\mathcal D^\op \text{ only if } q \in \mathfrak P\text{''}.
\end{equation}
In addition, we can easily derive, using \eqref{equ:id3} and unique
factorization, that
\begin{equation}
\label{equ:xfactor}
\text{``}\textstyle q \mid \varepsilon_{-I} + \prod_{i \in I}
p_i^{\alpha_{i,I}}\ \text{for some }q \in \mathbb P\text{ and }I \in \mathcal D^\op \text{ only if } q \in \mathfrak
P\text{''}.
\end{equation}
Both of \eqref{equ:hypo} and \eqref{equ:xfactor} will be often referred to
throughout the article. Lastly, we say that $\varepsilon$ is
\textit{$k$-symmetric} for a certain $k \in \mathbb N^+$ if both of the following
conditions hold:
\begin{displaymath}
\text{(i) } I \in \mathcal D \cap \mathcal P_k(S_n)\text{ only if }I \in
\mathcal D^\op; \quad \text{(ii) } \varepsilon_I = \varepsilon_{-I}\text{ for all
}I \in \mathcal D \cap \mathcal P_k(S_n).
\end{displaymath}
With all this in hand, we are finally ready to prove a few preliminary results
that will be used later, in Section \ref{sec:th1}, to establish our
main theorem.
\subsection{Preliminaries}
The material is intentionally organized
into a list of lemmas based on ``local'', rather than ``global'', hypotheses.

This is motivated by the
idea of highlighting which
is used for which purpose, in the hope that this can help find an approach to solve Question
\ref{quest1} in a broader generality.

In particular, the first half of Theorem
\ref{th:1}, namely the one corresponding to the case $\varepsilon_0 = 1$, will be
an immediate corollary of Lemma \ref{lem:neven} below (the second part needs more work).

In what follows, given $a,b \in \mathbb Z$ with $a^2
+ b^2 \ne 0$ we use $\gcd(a,b)$ for the greatest common divisor of $a$ and $b$. Furthermore, for every $m \in \mathbb N^+$ such that $\gcd(a,m) = 1$ we denote by ${\rm ord}_m(a)$ the smallest $k \in
\mathbb N^+$ for which $a^k \equiv 1 \bmod m$.
\begin{lemma}
\label{lem:expos}
If $p_{i} = 3$ for some $i \in S_n$ and there exists $j \in S_n \setminus
\{i\}$ such that $\{j\} \in \mathcal D^\op$, then one, and only one, of the
following conditions holds:
\begin{enumerate}[label={\rm\arabic{*}.}]
\item $\varepsilon_{-j} = -1$ and $\alpha_j$ is even.
\item $\varepsilon_{-j} = -1$, $\alpha_j$ is odd and $p_j \equiv 1 \bmod
6$.
\item $\varepsilon_{-j} = 1$, $\alpha_j$ is odd and $p_j \equiv 2 \bmod
3$.
\end{enumerate}
\end{lemma}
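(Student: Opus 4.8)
The plan is to extract the entire trichotomy from the single identity \eqref{equ:id2} together with one elementary divisibility observation. Since $\{j\} \in \mathcal D^\op$ by hypothesis, \eqref{equ:id2} gives $P_{-j} = p_j^{\alpha_j} + \varepsilon_{-j}$ with $\alpha_j \in \mathbb N^+$. The key point is that $3$ must divide the left-hand side: because $i \ne j$ and $p_i = 3$, the prime $3$ occurs among the factors of $P_{-j} = \prod_{k \ne j} p_k^{v_k}$, so $3 \mid P_{-j}$. (This is precisely where the two hypotheses $p_i = 3$ and $j \ne i$ enter.) Reducing the identity modulo $3$ therefore yields
\[
p_j^{\alpha_j} \equiv -\varepsilon_{-j} \bmod 3.
\]

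Next I would use that the $p_k$ are distinct, so $p_j \ne p_i = 3$ and hence $p_j \equiv 1$ or $p_j \equiv 2 \bmod 3$, and split into cases via $2 \equiv -1 \bmod 3$. If $p_j \equiv 1 \bmod 3$, then $p_j^{\alpha_j} \equiv 1 \bmod 3$ for every exponent, forcing $-\varepsilon_{-j} \equiv 1$, i.e.\ $\varepsilon_{-j} = -1$, irrespective of the parity of $\alpha_j$. If instead $p_j \equiv 2 \bmod 3$, then $p_j^{\alpha_j} \equiv (-1)^{\alpha_j} \bmod 3$, so $\varepsilon_{-j} = -1$ when $\alpha_j$ is even and $\varepsilon_{-j} = 1$ when $\alpha_j$ is odd.

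It remains to sort the four admissible pairs $(p_j \bmod 3,\ \text{parity of }\alpha_j)$ into the three listed alternatives and to check exhaustiveness and exclusivity. The two pairs with $\alpha_j$ even both produce $\varepsilon_{-j} = -1$, giving alternative~1; the pair $(p_j \equiv 1 \bmod 3,\ \alpha_j \text{ odd})$ gives $\varepsilon_{-j} = -1$, i.e.\ alternative~2; and $(p_j \equiv 2 \bmod 3,\ \alpha_j \text{ odd})$ gives $\varepsilon_{-j} = 1$, i.e.\ alternative~3. To replace $p_j \equiv 1 \bmod 3$ by $p_j \equiv 1 \bmod 6$ in alternative~2, I would note that any prime congruent to $1$ modulo $3$ is at least $7$, hence odd, so automatically $\equiv 1 \bmod 6$. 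Mutual exclusivity is then immediate: alternative~1 is separated from the other two by the parity of $\alpha_j$ (even versus odd), while alternatives~2 and~3 are separated by the value of $\varepsilon_{-j}$ ($-1$ versus $+1$).

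Since the whole argument reduces to a finite check of residues modulo $3$, I do not anticipate any genuine obstacle. The only two places demanding a moment's care are the divisibility $3 \mid P_{-j}$, which is the sole use of the standing hypotheses, and the primality upgrade from modulus $3$ to modulus $6$ needed to state alternative~2 in its given form.
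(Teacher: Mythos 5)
Your proof is correct and follows essentially the same route as the paper: both derive $3 \mid p_j^{\alpha_j} + \varepsilon_{-j}$ from the identity $P_{-j} = p_j^{\alpha_j} + \varepsilon_{-j}$ (the paper cites \eqref{equ:id4}, you cite \eqref{equ:id2}, which amounts to the same divisibility) and then check residues modulo $3$. The only difference is that you spell out the case analysis and the mod-$3$-to-mod-$6$ upgrade that the paper's one-line proof leaves implicit.
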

\begin{proof}
The hypotheses and equation \eqref{equ:id4} give that $3 \mid
p_j^{\alpha_j}+\varepsilon_{-j}$, which is possible only if one, and only one,
of the desired conditions is satisfied.
\end{proof}
The next lemma, as elementary as it is, provides a sufficient condition under
which $2 \in \mathfrak P$. (As a rule of thumb, having a way to show that $2$ and $3$ are in
$\mathfrak P$ looks like a key aspect of the problem in its full generality.)
\begin{lemma}
\label{lem:p1=2}
If there exists $I \in \mathcal D$ such that $1 \notin I$ then
$p_1 = 2$; moreover, $\alpha_1 \ge 4$ if, in addition to the other assumptions, $I
\in \mathcal P_{n-1}(S_n)$.
\end{lemma}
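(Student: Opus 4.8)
The plan is to treat the two assertions separately, both by elementary arguments resting on the fixed ordering $p_1 < \cdots < p_n$.

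For the first claim, that $p_1 = 2$, the key observation is that the hypothesis $1 \notin I$ forces $P_I$ to be odd \emph{irrespective of whether or not $p_1 = 2$}: since $2$ is the least prime and $p_1 < \cdots < p_n$, the only index $i$ for which $p_i$ could possibly equal $2$ is $i = 1$, and this index is excluded from $I$. Hence every $p_i$ with $i \in I$ is odd, so $P_I = \prod_{i \in I} p_i^{v_i}$ is odd and $P_I - \varepsilon_I$ is even. Thus $2$ is a prime divisor of $\prod_{i \in I} p_i^{v_i} - \varepsilon_I$ for some $I \in \mathcal D$, and the standing hypothesis of the section yields $2 \in \mathfrak P$; since $p_1 = \min \mathfrak P$, this gives $p_1 = 2$.

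For the ``moreover'' part I would first note that combining $1 \notin I$ with $I \in \mathcal P_{n-1}(S_n)$ pins down $I = S_n \setminus \{1\}$, so that $\{1\} = S_n \setminus I \in \mathcal D^\op$ and equation \eqref{equ:id2} becomes available with $i = 1$. Using the first part ($p_1 = 2$), that identity reads $P_{-1} = 2^{\alpha_1} + \varepsilon_{-1}$ with $\alpha_1 \in \mathbb N^+$. It then remains to bound $P_{-1}$ from below: since $n \ge 3$ and $p_1 = 2$, the product $P_{-1} = \prod_{i=2}^n p_i^{v_i}$ runs over at least two distinct odd primes, the smallest admissible values being $p_2 \ge 3$ and $p_3 \ge 5$, whence $P_{-1} \ge 15$. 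Therefore $2^{\alpha_1} = P_{-1} - \varepsilon_{-1} \ge 15 - 1 = 14 > 2^3$, which forces $\alpha_1 \ge 4$.

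No serious obstacle is anticipated here: the whole argument reduces to the parity remark that $1 \notin I$ makes $P_I$ odd, together with the crude size estimate $P_{-1} \ge 15$ guaranteed by $n \ge 3$. The only point demanding a little care is the reduction identifying $I = S_n \setminus \{1\}$ in the second part, as this is precisely what makes \eqref{equ:id2} applicable and lets us interpret $\alpha_1$ as the exponent $\alpha_{1,\{1\}}$.
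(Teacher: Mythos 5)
Your proof is correct and follows essentially the same route as the paper's: the parity observation that $1 \notin I$ makes $P_I - \varepsilon_I$ even, so $2 \in \mathfrak P$ and hence $p_1 = 2$ by the ordering, followed by the bound $2^{\alpha_1} = P_{-1} - \varepsilon_{-1} \ge 3 \cdot 5 - 1 = 14$ from \eqref{equ:id2}, forcing $\alpha_1 \ge 4$. Your explicit identification of $I = S_n \setminus \{1\}$, which puts $\{1\}$ in $\mathcal D^\op$ and justifies invoking \eqref{equ:id2}, is a detail the paper leaves implicit, but the argument is the same.
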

\begin{proof}
Clearly, $p_i$ is odd for
each $i \in I$, which means that $P_I - \varepsilon_I$ is even, and hence $p_1 =
2$ by \eqref{equ:hypo} and the assumed ordering of the primes $p_i$. Thus, it
follows from \eqref{equ:id2} that if $I \in \mathcal P_{n-1}$ then $2^{\alpha_1}
=
P_{-1} - \varepsilon_{-1} \ge 3 \cdot 5 - 1$, with the result that
$\alpha_1 \ge 4$.
\end{proof}
The following two lemmas prove that, in the case of a $1$-symmetric
$\varepsilon$, mild hypotheses imply that $3 \in \mathfrak P$.
\begin{lemma}
\label{lem:p=1modq}
Suppose that $\varepsilon$ is $1$-symmetric and pick a prime $q \notin \mathfrak
P$. Then, there does not exist any $i \in S_n$ such that $\{i\} \in \mathcal D$
and $p_i \equiv 1 \bmod q$.
\end{lemma}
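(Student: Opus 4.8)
The plan is to argue by contradiction: suppose there is some $i \in S_n$ with $\{i\} \in \mathcal D$ and $p_i \equiv 1 \bmod q$, and then force $q \in \mathfrak P$, contrary to the choice of $q$. A useful preliminary observation is that $p_i \equiv 1 \bmod q$ forces $q \mid p_i - 1$, which rules out $p_i = 2$ (else $q \mid 1$) and hence gives $p_i \ge 3$; in particular every integer of the form $p_i^m - 1$ with $m \ge 1$ that appears below is genuinely $\ge 2$ and is divisible by $q$, since $p_i^m \equiv 1 \bmod q$. The whole strategy is to exhibit, in each case, a number of the shape $p_i^m - 1$ all of whose prime factors are already known to lie in $\mathfrak P$, and then to observe that $q$ divides it.

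The argument splits according to the common value of $\varepsilon_i = \varepsilon_{-i}$, which is well defined precisely because $\varepsilon$ is $1$-symmetric: condition (i) in the definition of $1$-symmetry guarantees $\{i\} \in \mathcal D^\op$ in addition to $\{i\} \in \mathcal D$, and condition (ii) gives $\varepsilon_i = \varepsilon_{-i}$. If $\varepsilon_i = 1$, then applying the standing hypothesis on $\mathcal D$ to the set $\{i\} \in \mathcal D$ shows that every prime factor of $P_{\{i\}} - \varepsilon_i = p_i^{v_i} - 1$ belongs to $\mathfrak P$; since $q \mid p_i^{v_i} - 1$, we conclude $q \in \mathfrak P$, the desired contradiction. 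If instead $\varepsilon_i = -1$, I would pass to the complementary singleton: using $\{i\} \in \mathcal D^\op$ and equation \eqref{equ:id2} we get $P_{-i} = p_i^{\alpha_i} + \varepsilon_{-i} = p_i^{\alpha_i} - 1$, where I have used $\varepsilon_{-i} = \varepsilon_i = -1$. But $P_{-i} = \prod_{j \ne i} p_j^{v_j}$ is by construction a product of primes of $\mathfrak P$, and $q \mid p_i^{\alpha_i} - 1 = P_{-i}$, so once more $q \in \mathfrak P$, a contradiction.

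The lemma is elementary, and the only point requiring any care — and the reason the $1$-symmetry assumption is exactly what is needed — is the sign bookkeeping. The direct membership $\{i\} \in \mathcal D$ controls only $p_i^{v_i} - \varepsilon_i$, and when $\varepsilon_i = -1$ this quantity is $\equiv 2 \bmod q$, hence coprime to $q$ for odd $q$, so it yields nothing. One is therefore forced to move to the complementary set $S_n \setminus \{i\}$ in order to manufacture a $-1$ on the right-hand side of \eqref{equ:id2}, and this manoeuvre is legitimate only because $1$-symmetry supplies both $\{i\} \in \mathcal D^\op$ and $\varepsilon_{-i} = \varepsilon_i$. I expect no real obstacle beyond correctly matching each value of $\varepsilon_i$ to the relation ($\{i\} \in \mathcal D$ versus \eqref{equ:id2}) that produces a $p_i^m - 1$ factoring over $\mathfrak P$.
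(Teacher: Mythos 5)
Your proof is correct and follows essentially the same route as the paper's: a contradiction argument that splits on the common value $\varepsilon_i = \varepsilon_{-i}$ (well defined by $1$-symmetry), handling $\varepsilon_i = 1$ via the standing hypothesis \eqref{equ:hypo} applied to $q \mid p_i^{v_i}-1$, and $\varepsilon_i = -1$ via \eqref{equ:id2}, which gives $q \mid p_i^{\alpha_i}-1 = P_{-i}$ and contradicts unique factorization. The paper merely packages the two cases as a pair of congruences before concluding, so there is no substantive difference.
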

\begin{proof}
Assume for the sake of contradiction that there exists $i_0 \in S_n$ such that
$\{i_0\} \in \mathcal D$ and $p_{i_0} \equiv 1 \bmod q$.
Then, using that $\varepsilon$ is $1$-symmetric, we get from
\eqref{equ:id1} and \eqref{equ:id2} that
\begin{displaymath}
 1 - \varepsilon_0 \equiv p_{i_0}^{v_{i_0}} - \varepsilon_0 \equiv
\prod_{i \in I_0} p_i^{\alpha_{i,I_0}}\bmod q
\end{displaymath}
and
\begin{displaymath}
 P_{I_0}
\equiv p_{i_0}^{\alpha_{i_0}} + \varepsilon_0 \equiv 1 + \varepsilon_0 \bmod q,
\end{displaymath}
where $I_0 := S_n \setminus \{i_0\}$. But $q \notin \mathfrak P$ implies $q \nmid p_{i_0}^{v_{i_0}}-\varepsilon_0$ by \eqref{equ:hypo}, with the result that $\varepsilon_0=-1$ (from the above), and then $q \mid P_{I_0}$.

By unique factorization, this is however in contradiction to the fact that $q$ is not in $\mathfrak P$.
\end{proof}
\begin{lemma}
\label{lem:p2=3}
Let $\varepsilon$ be $1$-symmetric and suppose there exists $J \in \mathcal
P_\star(S_n)$ such that $|S_n \setminus J|$ is even,
$\mathcal D_0 := \mathcal P_1(S_n) \cup \{S_n \setminus J\} \subseteq
\mathcal D$, and the restriction of $\varepsilon$ to $\mathcal D_0$ is
constant. Then $p_2 = 3$ and $\alpha_2 \ge \frac{1}{2}(5-\varepsilon_0)$.
\end{lemma}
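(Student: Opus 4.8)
The plan is to first pin down $p_1 = 2$ and then show $3 \in \mathfrak{P}$ by contradiction. Since $\mathcal{P}_1(S_n) \subseteq \mathcal{D}_0 \subseteq \mathcal{D}$ and $n \ge 3$, the singleton $\{2\}$ lies in $\mathcal{D}$ and omits the index $1$, so Lemma~\ref{lem:p1=2} gives $p_1 = 2$ and in particular $p_2 \ge 3$. I would then assume toward a contradiction that $p_2 \ne 3$, so that $p_2 \ge 5$ and $3 \notin \mathfrak{P}$. As $\varepsilon$ is $1$-symmetric and no $p_i$ equals $3$, Lemma~\ref{lem:p=1modq} applied with the prime $q = 3$ excludes $p_i \equiv 1 \bmod 3$ for every $i$, whence $p_i \equiv -1 \bmod 3$ for all $i$. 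I would also record the factorizations that $1$-symmetry makes available for each $i \in S_n$: both $\{i\}$ and $S_n \setminus \{i\}$ lie in $\mathcal{D}$ and $\varepsilon_{-i} = \varepsilon_i = \varepsilon_0$, so \eqref{equ:id2} reads $\prod_{k \ne i} p_k^{v_k} = p_i^{\alpha_i} + \varepsilon_0$, while \eqref{equ:id1} applied to $I = S_n \setminus \{i\}$ yields $p_i^{v_i} - \varepsilon_0 = \prod_{k \ne i} p_k^{\alpha_{k,I}}$, a product all of whose prime factors differ from $3$. Finally, writing $K := S_n \setminus J$ (so $K \in \mathcal{D}$, $|K|$ is even, and $\varepsilon_K = \varepsilon_0$), identity~\eqref{equ:id1} with $I = J$ gives $P_K = \varepsilon_0 + \prod_{i \in J} p_i^{\alpha_{i,J}}$.

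The core of the proof is a dichotomy on the parities of the exponents $v_i$. If there is an index $i$ with $(-1)^{v_i} = \varepsilon_0$, then $p_i^{v_i} - \varepsilon_0 \equiv (-1)^{v_i} - \varepsilon_0 \equiv 0 \bmod 3$, so $3$ divides $p_i^{v_i} - \varepsilon_0$; but every prime factor of this number lies in $\mathfrak{P}$ and is distinct from $3$, a contradiction. Otherwise $(-1)^{v_i} = -\varepsilon_0$ for all $i$, i.e. the $v_i$ share a common parity, and I would split according to $\varepsilon_0$. When $\varepsilon_0 = -1$ all the $v_i$ are even, so \eqref{equ:id2} at $i = 1$ exhibits $\prod_{k \ge 2} p_k^{v_k} = 2^{\alpha_1} - 1$ with a perfect square on the left; since the product is at least $p_2 \ge 5$ we get $\alpha_1 \ge 3$, and then $2^{\alpha_1} - 1 \equiv 3 \bmod 4$ cannot be a square, a contradiction. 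When $\varepsilon_0 = +1$ all the $v_i$ are odd, so $\sum_{i \in K} v_i \equiv |K| \equiv 0 \bmod 2$ and hence $P_K \equiv 1 \bmod 3$; the $K$-identity then forces $3 \mid P_K - 1 = \prod_{i \in J} p_i^{\alpha_{i,J}}$, once more contradicting $3 \notin \mathfrak{P}$. Either way $p_2 = 3$.

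It remains to bound $\alpha_2$, which is now immediate: applying \eqref{equ:id2} at $i = 2$ with $p_2 = 3$ gives $3^{\alpha_2} + \varepsilon_0 = P_{-2} = \prod_{k \ne 2} p_k^{v_k} \ge p_1 p_3 \ge 2 \cdot 5 = 10$, so $3^{\alpha_2} \ge 10 - \varepsilon_0$, which forces $\alpha_2 \ge 2$ when $\varepsilon_0 = 1$ and $\alpha_2 \ge 3$ when $\varepsilon_0 = -1$, that is $\alpha_2 \ge \tfrac{1}{2}(5 - \varepsilon_0)$. The step I expect to be the main obstacle is the case $\varepsilon_0 = -1$: reduction modulo $3$ is inconclusive there, because the relevant congruence is satisfied automatically, so one must localize at the prime $2$ and exploit the perfect-square obstruction instead. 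Correspondingly, the assumption that $S_n \setminus J$ has even cardinality is exactly what rescues the complementary case $\varepsilon_0 = +1$ in which all exponents are odd.
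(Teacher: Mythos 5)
Your proposal is correct and follows essentially the same route as the paper's proof: assume $3 \notin \mathfrak P$, invoke Lemma \ref{lem:p=1modq} to force $p_i \equiv -1 \bmod 3$ for all $i$, use the singleton factorizations to pin down the common parity of the $v_i$, dispose of $\varepsilon_0 = 1$ via the even-cardinality set $S_n \setminus J$ working modulo $3$, dispose of $\varepsilon_0 = -1$ by a contradiction modulo $4$ at the prime $2$, and then read off the bound on $\alpha_2$ from \eqref{equ:id2}. The one (harmless, in fact slightly simplifying) deviation is in the $\varepsilon_0 = -1$ case: where the paper notes that $P_{-i} = p_i^{\alpha_i}-1$ is a square so that $-1$ is a quadratic residue and hence $p_i \equiv 1 \bmod 4$ for $i \ge 2$, you observe directly that $P_{-1}$ is an odd perfect square, hence $\equiv 1 \bmod 4$, which is incompatible with $P_{-1} = 2^{\alpha_1}-1 \equiv 3 \bmod 4$.
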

\begin{proof}
Let $\varepsilon$ take the constant value $\varepsilon_0$ when restricted to
$\mathcal D_0$ and assume by contradiction that $3 \notin \mathfrak P$.

Then,
Lemma \ref{lem:p=1modq} gives that $p_i \equiv - 1 \bmod 3$ for all $i \in
S_n$, while taking $I = S_n \setminus \{i\}$ in \eqref{equ:id1} and working
modulo $3$ entails by \eqref{equ:hypo} that
\begin{displaymath}
 p_i^{v_i} - \varepsilon_0 \equiv \prod_{j \in I} p_j^{\alpha_{j,I}}
\not\equiv 0 \bmod 3,
\end{displaymath}
so that $v_i$ is odd if $\varepsilon_0 = 1$ and even otherwise (here is where we use that $\mathcal P_1(S_n) \in \mathcal D$ and $\varepsilon$ is
$1$-symmetric, in such a way that $\mathcal P_{n-1}(S_n) \in \mathcal D$ too).
Now, since $S_n \setminus J \in \mathcal D$, the same kind of reasoning
also yields that
\begin{displaymath}
 1 - \varepsilon_0 \equiv P_{-J} - \varepsilon_0 \equiv \prod_{j \in
J} p_j^{\alpha_{j,J}} \bmod 3,
\end{displaymath}
with the result that if
$\varepsilon_0 = 1$ then $3 \in \mathfrak P$ by \eqref{equ:hypo}, as follows from the fact that $S_n
\setminus J$ has an even number of elements and $v_i$ is odd for each $i \in J$ (which was proved before).
This is however a contradiction.

So we are left with the case $\varepsilon_0 = -1$. Since $-1$ is not a
quadratic residue modulo a prime $p \equiv -1 \bmod 4$, we get from the above and
\eqref{equ:id2} that in this case $p_i \equiv 1 \bmod 4$ for each $i = 2, \ldots, n$.

Therefore, \eqref{equ:id1} together with Lemma \ref{lem:p1=2} gives that $P_{-1} + 1
= 2^{\alpha_1}$ with $\alpha_1 \ge 2$, which is again a contradiction as it
means that $2 \equiv 0 \bmod 4$.

All of this proves that $p_2 = 3$, which in turn implies
from \eqref{equ:id2}  that $3^{\alpha_2} = P_{-2} - \varepsilon_{-2} \ge 2 \cdot 5 - \varepsilon_0$ (since $\varepsilon$ is $1$-symmetric and its restriction to $\mathcal D_0$ is constantly equal to $\varepsilon_0$, we have $\varepsilon_{-2} = \varepsilon_0$), so $\alpha_2 \ge 2$ if $\varepsilon_0 = 1$ and $\alpha_2 \ge 3$ if $\varepsilon_0 = -1$, i.e., $\alpha_2 \ge \frac{1}{2}(5-\varepsilon_0)$ in both cases.
\end{proof}
We now show that, if $\mathcal{D}$ contains some distinguished
subsets of $S_n$ and $\varepsilon$ is subjected to certain conditions,
then $p_i$ must be a Fermat prime.
\begin{lemma}\label{lem:fermat}
Let $\mathcal P_1(S_n
\setminus \{1\}) \subseteq \mathcal D^\op$ and assume there exists $i \in
S_n \setminus \{1\}$ such that $\{i\} \in \mathcal D$ and $\varepsilon_{\pm i} = 1$. Then, $p_i$
is a Fermat prime.
\end{lemma}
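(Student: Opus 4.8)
The plan is to reduce the statement to the single claim that $p_i-1$ is a power of $2$. Once this is known, write $p_i = 2^s+1$ and recall the elementary fact that $2^s+1$ can be prime only when $s$ is itself a power of $2$ (for if $d \mid s$ with $d$ odd and $d>1$, then $2^{s/d}+1$ is a proper divisor of $2^s+1=(2^{s/d})^d+1$); this gives $p_i = 2^{2^m}+1$ for some $m \in \mathbb N$, i.e.\ $p_i$ is a Fermat prime. Observe at the outset that $i \ne 1$ together with $p_1 < \cdots < p_n$ forces $p_i \ge p_2 \ge 3$, so $p_i$ is odd and $p_i-1 \ge 2$.

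First I would unwind the two hypotheses into usable identities. Since $i \in S_n \setminus \{1\}$ and $\mathcal P_1(S_n \setminus \{1\}) \subseteq \mathcal D^\op$, we have $\{i\} \in \mathcal D^\op$, so \eqref{equ:id2} applies and, using $\varepsilon_{-i}=1$, yields
\begin{displaymath}
P_{-i} = \prod_{k \ne i} p_k^{v_k} = p_i^{\alpha_i} + 1, \qquad \alpha_i \ge 1 .
\end{displaymath}
On the other hand, $\{i\} \in \mathcal D$ and $\varepsilon_i = 1$ give, via the standing hypothesis \eqref{equ:hypo} (equivalently, reading off \eqref{equ:id1} for $I = S_n \setminus \{i\} \in \mathcal D^\op$, which also shows the relevant primes are distinct from $p_i$), that every prime divisor of $p_i^{v_i}-1$ lies in $\mathfrak P = \{p_1, \ldots, p_n\}$.

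The heart of the argument is to show that no odd prime divides $p_i-1$. Suppose $\ell$ is such a prime. Since $p_i-1 \mid p_i^{v_i}-1$, the previous step gives $\ell \in \mathfrak P$, say $\ell = p_j$, and necessarily $j \ne i$ because $p_i \nmid p_i-1$. Then $p_j = \ell$ divides the product $P_{-i} = \prod_{k\ne i} p_k^{v_k}$, so reducing the displayed identity modulo $\ell$ and using $p_i \equiv 1 \bmod \ell$ gives $0 \equiv p_i^{\alpha_i}+1 \equiv 2 \bmod \ell$, whence $\ell \mid 2$ — contradicting that $\ell$ is odd. Hence $p_i-1$ has $2$ as its only prime factor, and the reduction above finishes the proof.

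I do not expect a genuine obstacle here: the argument is short and, in particular, does not require Zsigmondy's theorem (Theorem \ref{th:zsigmondy}). The only points demanding care are bookkeeping ones — reading $\varepsilon_{\pm i}=1$ as $\varepsilon_i = \varepsilon_{-i}=1$, keeping the dual roles of $\{i\}\in\mathcal D$ and $\{i\}\in\mathcal D^\op$ straight, and verifying $j \ne i$ so that $\ell = p_j$ genuinely occurs as a factor of $P_{-i}$, which is exactly what produces the clean congruence $0 \equiv 2 \bmod \ell$.
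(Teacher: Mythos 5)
Your proof is correct, and its core is the same contradiction as in the paper's proof: an odd prime $\ell$ dividing $p_i-1$ also divides $p_i^{v_i}-1$, hence equals some $p_j$ with $j \ne i$ by the standing hypothesis, and reducing a product identity modulo $p_j$ yields the absurdity $0 \equiv 2 \bmod p_j$; the endgame ($p_i - 1$ a power of $2$ forces $p_i$ to be a Fermat prime) is also the same, with your inline argument replacing the paper's citation of \cite[Theorem 17]{Hardy08}. The one genuine difference is which identity gets reduced. The paper reduces \eqref{equ:id4}, i.e.\ $p_j^{v_j}(p_j^{\alpha_j}+\varepsilon_{-j}) = p_i^{v_i}(p_i^{\alpha_i}+1)$, which requires $\{j\} \in \mathcal D^\op$; to guarantee this from the inclusion $\mathcal P_1(S_n \setminus \{1\}) \subseteq \mathcal D^\op$ one must know $j \ne 1$, which is why the paper first invokes Lemma \ref{lem:p1=2} to get $p_1 = 2$ (so that an odd $q$ cannot be $p_1$). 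You instead reduce \eqref{equ:id2} for $i$ alone, $P_{-i} = p_i^{\alpha_i}+1$, observing that $p_j \mid P_{-i}$ simply because $j \ne i$. This is leaner: your argument never uses $p_1 = 2$ or the membership $\{j\} \in \mathcal D^\op$, so it in fact proves the lemma under the weaker hypothesis that $\{i\} \in \mathcal D \cap \mathcal D^\op$ and $\varepsilon_{\pm i} = 1$, whereas the paper's route consumes the full stated inclusion. Both proofs are short and elementary, and neither needs Zsigmondy's theorem, as you note.
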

\begin{proof}
It is clear from Lemma \ref{lem:p1=2} that $p_1 = 2$.
Suppose by contradiction that there exists an odd prime $q$ such that $q \mid
p_i - 1$ (note that $p_i \ge 3$), and hence $q \mid p_i^{v_i} - \varepsilon_i$.

Then, taking $I = \{i\}$ in \eqref{equ:hypo} gives that $q = p_j$ for some $j
\in S_n \setminus \{1,i\}$. Considering that $\mathcal P_1(S_n \setminus \{1\}) \subseteq \mathcal D^\op$, it
follows from \eqref{equ:id4} that
\begin{displaymath}
p_j^{v_j} (p_j^{\alpha_j} + \varepsilon_{-j}) =
p_i^{v_i} (p_i^{\alpha_i} + 1),
\end{displaymath}
where we use that $\varepsilon_{-i} = 1$. This is however a contradiction, because
it implies that $0 \equiv 2 \bmod p_j$ (with $p_j \ge 3$). So, $p_i$ is a Fermat
prime by \cite[Theorem 17]{Hardy08}.
\end{proof}
\begin{lemma}
\label{lem:neven}
Let $\mathcal P_1(S_n) \subseteq
\mathcal D^\op$ and suppose that $p_i = 3$ for some $i \in S_n$ and there exists $j \in S_n \setminus \{1,i\}$ such that $\{j\}
\in \mathcal D$ and $\varepsilon_{\pm j} = 1$. Then $i = 2$, $p_1 = 2$, and
$\varepsilon_{-1}=-1$.
\end{lemma}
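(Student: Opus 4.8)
The plan is to first pin down $p_1$ and the index $i$ by elementary means, then to invoke Lemma~\ref{lem:fermat}, and finally to determine $\varepsilon_{-1}$ from two incompatible parity conditions on $\alpha_1$. Since $j \ne 1$ and $\{j\} \in \mathcal D$, Lemma~\ref{lem:p1=2} (applied with $I = \{j\}$) gives $p_1 = 2$. As the primes satisfy $p_1 < \cdots < p_n$ and there is no prime strictly between $2$ and $3$, the hypotheses $p_i = 3$ and $p_1 = 2$ force $i = 2$, hence $p_2 = 3$: indeed $i = 1$ is excluded since $p_1 = 2 \ne 3$, while $i \ge 3$ would require $2 = p_1 < p_2 < p_i = 3$, which is absurd. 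This settles two of the three conclusions.

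Next, since $\mathcal P_1(S_n) \subseteq \mathcal D^\op$ a fortiori gives $\mathcal P_1(S_n \setminus \{1\}) \subseteq \mathcal D^\op$, and since $j \in S_n \setminus \{1\}$ satisfies $\{j\} \in \mathcal D$ and $\varepsilon_{\pm j} = 1$, Lemma~\ref{lem:fermat} applies and tells us that $p_j$ is a Fermat prime, say $p_j = 2^{2^m}+1$. Because $p_j \ne 3 = p_2$ (the $p_k$ being distinct with $j \ne 2$), we have $m \ge 1$ and $p_j \ge 5$; and from $2^{2^m} \equiv -1 \bmod p_j$ one reads off that ${\rm ord}_{p_j}(2) = 2^{m+1}$, the order dividing $2^{m+1}$ but not $2^m$.

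For the remaining conclusion I would argue by contradiction, assuming $\varepsilon_{-1} = 1$. Since $\{1\} \in \mathcal D^\op$, equation~\eqref{equ:id2} yields $P_{-1} = 2^{\alpha_1}+1$ with $\alpha_1 \ge 1$. On the one hand, $p_2 = 3 \mid P_{-1}$ gives $3 \mid 2^{\alpha_1}+1$, so $\alpha_1$ is odd. On the other hand, $p_j \mid P_{-1} = 2^{\alpha_1}+1$ as well (because $j \ge 2$), whence $2^{\alpha_1} \equiv -1 \bmod p_j$ and therefore $2^{2\alpha_1} \equiv 1 \bmod p_j$; consequently $2^{m+1} = {\rm ord}_{p_j}(2)$ divides $2\alpha_1$, i.e.\ $2^m \mid \alpha_1$, and $m \ge 1$ makes $\alpha_1$ even. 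This contradiction forces $\varepsilon_{-1} = -1$, completing the argument.

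The main obstacle is precisely this last step: one must produce two coprime moduli, namely $3$ (available because $p_2 = 3$) and the Fermat prime $p_j$, that impose opposite parities on the single exponent $\alpha_1$. Everything else is routine, and the role of Lemma~\ref{lem:fermat} is exactly to supply the rigid $2$-power structure of $p_j - 1$ that pins down ${\rm ord}_{p_j}(2)$, and with it the parity of $\alpha_1$, against the parity forced by the factor $3$ of $P_{-1}$. Notably, this makes the proof self-contained relative to the earlier lemmas, with no appeal to Zsigmondy's theorem needed here.
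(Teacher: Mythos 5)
Your proof is correct and follows essentially the same route as the paper's: Lemma~\ref{lem:p1=2} to get $p_1=2$ and $i=2$, Lemma~\ref{lem:fermat} to get that $p_j$ is a Fermat prime $\ge 5$, and then a contradiction from $\varepsilon_{-1}=1$ via the order of $2$ modulo $p_j$ acting on $P_{-1}=2^{\alpha_1}+1$. The only cosmetic differences are that you derive the oddness of $\alpha_1$ directly from $3 \mid 2^{\alpha_1}+1$ (which is exactly the content of Lemma~\ref{lem:expos} that the paper cites instead) and you phrase the final clash as incompatible parities of $\alpha_1$, where the paper bounds ${\rm ord}_{p_j}(2) \le \gcd(2\alpha_1, p_j-1) = 2$ to force $p_j \le 3$.
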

\begin{proof}
First, we have by Lemma \ref{lem:p1=2} that $p_1 = 2$, and hence $i = 2$.
Also, $p_j$ is a Fermat prime by Lemma \ref{lem:fermat} (and
clearly $p_j \ge 5$). So assume for a contradiction that
$\varepsilon_{-1}=1$.

Then, Lemma \ref{lem:expos} and \eqref{equ:id2} imply that
$p_j \mid P_{-1} = 2^{\alpha_1}+1$ with $\alpha_1$ odd, with the result that
$2 \le {\rm ord}_{p_j}(2) \le \gcd(2\alpha,
p_j - 1) = 2$. It follows that $5 \le p_j \le 2^2-1$, which is obviously
impossible.
\end{proof}
The proof of the next lemma depends on Zsigmondy's theorem. Although not
strictly related to the statement and the assumptions of Theorem \ref{th:1}, it
will be of great importance later on.
\begin{lemma}
\label{lem:sort_of_diophantine}
Pick $p,q \in \mathbb P$ and assume that there exist
$x,y,z \in \mathbb N$ for which $x \ne 0$, $y \ge 2$,
$p \mid q+1$ and $q^x - 1 = p^y(q^z - 1)$. Then $x=2$,
$z=1$, $p=2$, $y \in \mathbb P$, and $q=2^y-1$.
\end{lemma}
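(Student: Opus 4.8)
The plan is to pin down the shape of the equation by a divisibility reduction and then invoke Zsigmondy's theorem (Theorem \ref{th:zsigmondy}) to force $x$ to be small, with the hypothesis $p \mid q+1$ doing the decisive work through the multiplicative order of $q$ modulo $p$.

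First I would clear away the degenerate value of $z$. If $z=0$, the right-hand side vanishes and $q^x = 1$, which is impossible for $q \ge 2$ and $x \ge 1$; hence $z \ge 1$. From $q^x - 1 = p^y(q^z-1)$ we read off $q^z - 1 \mid q^x - 1$, and since $\gcd(q^a - 1, q^b - 1) = q^{\gcd(a,b)} - 1$, this is equivalent to $z \mid x$. Writing $x = mz$ and dividing both sides by $q^z - 1$ yields
\[
p^y = 1 + q^z + q^{2z} + \cdots + q^{(m-1)z}.
\]
As $p^y \ge 4 > 1$, the right-hand sum has at least two terms, so $m \ge 2$; in particular $x \ge 2z \ge 2$ and $z < x$. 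Finally, $p \mid q+1$ means $q \equiv -1 \bmod p$, so $q^2 \equiv 1 \bmod p$ and therefore ${\rm ord}_p(q) \in \{1,2\}$. This order bound is the key leverage.

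Next I would apply Zsigmondy's theorem to $q^x - 1$ (that is, with $a=q$, $b=1$, $n = x \ge 2$). Suppose first that a primitive prime divisor $r$ exists, so $r \mid q^x - 1$ but $r \nmid q^k - 1$ for every $1 \le k < x$. Since $r \mid q^x - 1 = p^y(q^z-1)$ and $1 \le z < x$ forces $r \nmid q^z - 1$, we must have $r = p$, whence ${\rm ord}_p(q) = x$. Combined with ${\rm ord}_p(q) \le 2$ and $x \ge 2$, this gives $x = 2$ and ${\rm ord}_p(q) = 2$, so $p$ is odd (as ${\rm ord}_2(q) = 1$ for every odd $q$). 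Then $z = 1$ and $p^y = q+1$; but for odd $p$ and $y \ge 2$ the number $p^y - 1 = (p-1)(1 + p + \cdots + p^{y-1})$ is composite, contradicting the primality of $q$. Hence \emph{no} primitive prime divisor exists, and $(q,x)$ must be one of the exceptional configurations of Theorem \ref{th:zsigmondy}.

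This last point is where the real work lies. The exceptions leave only the case $x = 2$ with $q+1$ a power of $2$, together with the isolated configuration $(q,x) = (2,6)$. In the former, $z \mid x = 2$ with $1 \le z < x$ forces $z = 1$, so $p^y = q+1$ is a power of $2$; thus $p = 2$ and $q = 2^y - 1$, and since a prime of the form $2^y - 1$ forces $y$ to be prime (as $d \mid y$ implies $2^d - 1 \mid 2^y - 1$), we recover exactly $x = 2$, $z = 1$, $p = 2$, $y \in \mathbb P$ and $q = 2^y - 1$. The sporadic exception $(q,x) = (2,6)$, in which $p \mid 3$ gives $p = 3$, I would settle by direct inspection of the finitely many possibilities. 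I expect the main obstacle to be precisely this bookkeeping of Zsigmondy's exceptional cases: the generic argument via the order of $q$ modulo $p$ is clean, but it is silent exactly when Zsigmondy fails, so the conclusion genuinely hinges on a careful, separate treatment of those degenerate configurations.
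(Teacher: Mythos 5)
Your argument is sound in the generic regime and in the $n=2$ exceptional case, but the step you deferred --- settling the configuration $(q,x)=(2,6)$ ``by direct inspection of the finitely many possibilities'' --- is not a formality that can be discharged: inspection of that case produces a solution rather than eliminating it. Take $q=2$ and $x=6$, so that $p \mid q+1$ forces $p=3$, and then $y=2$, $z=3$ gives
\[
q^x-1 \;=\; 2^6-1 \;=\; 63 \;=\; 3^2\cdot 7 \;=\; 3^2\bigl(2^3-1\bigr) \;=\; p^y\bigl(q^z-1\bigr).
\]
Every hypothesis of the lemma holds ($p,q\in\mathbb P$, $x\ne 0$, $y\ge 2$, $p\mid q+1$), yet every conclusion fails: $x\ne 2$, $z\ne 1$, $p\ne 2$. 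So the gap in your proposal is unfillable: the lemma as stated is false, and it can only be repaired by adding a hypothesis that excludes this configuration (for instance requiring $q$ to be odd, or $(q,x)\ne(2,6)$).

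What makes this worth spelling out is that your proof otherwise follows essentially the same route as the paper's, and is in fact more scrupulous. The paper proves ``$x\le 2$'' by applying Zsigmondy's theorem to $q^x-1$ for $x>2$ without ever checking the excluded triple $(a,b,n)=(2,1,6)$ --- precisely where the counterexample above lives --- and then concludes, as you do inside the $n=2$ exception, from $p^y=q+1$. Your generic-case argument (a primitive prime divisor $r$ of $q^x-1$ must equal $p$ by unique factorization, whence ${\rm ord}_p(q)=x$, forcing $x=2$ and then a parity contradiction) is a correct variant of the paper's, and your closing remark that the conclusion ``genuinely hinges on a careful, separate treatment of those degenerate configurations'' is exactly the point the paper misses. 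Note also that the defect is not cosmetic for the paper: Lemma \ref{lem:sort_of_diophantine} is invoked in the proof of Theorem \ref{th:1} with $p=3$ and $q=p_{i_0}$, where $p_{i_0}=2$ is allowed (since $3\mid 2+1$), so the deduction there that $\beta_{i_0}\ge 1$ is likewise unjustified in that case; the theorem may well survive, but its proof needs patching.
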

\begin{proof}
Since $x \ne 0$, it is clear that $q^x - 1 \ne 0$, with the
result that $z \ne 0$ and $q^z - 1 \ne 0$ too. Therefore,
using also that $y \ne 0$, one has that
\begin{equation}
\label{equ:zsigmondyeq}
p^y = (q^x - 1)/(q^z-1) > 1,
\end{equation}
which is obviously possible only if
\begin{equation}
\label{equ:trivial_ineq}
x > z \ge 1.
\end{equation}
We claim that $x \le 2$. For suppose to the contrary that $x > 2$. Then by
Zsigmondy's theorem, there must exist at least one $r \in \mathbb P$ such that
$r \mid q^x - 1$ and
\begin{displaymath}
r \nmid q^k - 1\text{ for each positive integer }k < x.
\end{displaymath}
In particular, \eqref{equ:zsigmondyeq} yields that $r = p$ (by unique
factorization), which is a contradiction since $p \mid q^2 - 1$. Thus, we get
from \eqref{equ:trivial_ineq} that $x = 2$ and $z = 1$. Then, $p^y = q+1$, that
is $p^y - 1 \in \mathbb P$, and this is absurd unless $p = 2$ and $y \in \mathbb
P$. The claim follows.
\end{proof}
This completes the preliminaries, and we can now proceed to the
proof of the main result of the paper.
\section{Proof of Theorem \ref{th:1}}
\label{sec:th1}
Throughout we use the same notation and
assumptions as in Section \ref{sec:preparations}, but we specialize to the case
where
\begin{displaymath}
\mathcal D_0 := \mathcal P_1(S_n) \cup \mathcal P_{n-2}(S_n) \cup
\mathcal P_{n-1}(S_n)
\subseteq \mathcal D
\end{displaymath}
and $\varepsilon$ takes the constant
value $\varepsilon_0$ when restricted to $\mathcal D_0$ (as in the statement of
Theorem \ref{th:1}).
\begin{proof}[Proof of Theorem \ref{th:1}]
At least one of $n-2$ or $n-1$ is even, so we have by Lemmas \ref{lem:p1=2} and
\ref{lem:p2=3} that $p_1 = 2$,
$p_2 = 3$ and $v_2 \ge 2$.

There is, in consequence, no loss of
generality in assuming, as we do, that
$\varepsilon_0  = -1$, since the other case is impossible by Lemma
\ref{lem:neven}.

Thus,
pick $i_0 \in S_n$ such that $3 \mid p_{i_0} + 1$. It follows from \eqref{equ:id3}
and our hypotheses that there exist $\beta_{i_0}, \gamma_{i_0} \in \mathbb N$ such that
\begin{displaymath}
P = 3^{v_2}(3^{\alpha_2} - 1) = p_{i_0}^{v_{i_0}}\cdot \left(p_{i_0}^{
\alpha_{i_0}}-1\right)=3^{v_2}
p_{i_0}^{v_{i_0}}\cdot \left(3^{\beta_{i_0}} p_{i_0}^{\gamma_{i_0}}-1\right),
\end{displaymath}
with the result that, on the one hand,
\begin{equation}
\label{equ:dioph1}
p_{i_0}^{\alpha_{i_0}} - 1 = 3^{v_2} \cdot \left(3^{\beta_{i_0}}
p_{i_0}^{\gamma_{i_0}} -1\right),
\end{equation}
and on the other hand,
\begin{equation}
\label{equ:dioph2}
3^{\alpha_2} - 1 = p_{i_0}^{v_{i_0}} \cdot \left(3^{\beta_{i_0}}
p_{i_0}^{\gamma_{i_0}} -1\right).
\end{equation}
Then, since $v_2 \ge 2$ and $\alpha_{i_0} \ne 0$, we see by \eqref{equ:dioph1}
and Lemma  \ref{lem:sort_of_diophantine} that $\beta_{i_0} \ge 1$. It is then
found from \eqref{equ:dioph2} that $-1 \equiv (-1)^{v_{i_0} + 1} \bmod 3$, i.e.,
$v_{i_0}$ is even. To wit, we have proved that
\begin{equation}
\label{equ:final}
\forall i \in S_n: p_i \equiv -1 \bmod 3\implies v_i\text{ is even and }
p_i^{v_i}  \equiv 1 \bmod 3.
\end{equation}
But every prime $\ne 3$ is congruent to $\pm 1$ modulo $3$. Therefore, we get from
\eqref{equ:id2} and \eqref{equ:final} that
\begin{displaymath}
 2 \equiv \prod_{i\in S_n \setminus \{2\}}{p_i^{v_i}} + 1 \equiv 3^{\alpha_2}
\equiv 0 \bmod 3,
\end{displaymath}
which is obviously a contradiction and completes the proof.
\end{proof}
\section{Proof of Theorem \ref{th:2}}
\label{sec:th2}
In the present section, unless differently specified, we use the same notation and
assumptions of Theorem \ref{th:2}, whose proof is split into three lemmas (one for each aspect of the claim).
\begin{lemma}
\label{lem:oo}
$A$ is an infinite set.
\end{lemma}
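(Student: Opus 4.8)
The plan is to argue by contradiction: assume $A$ is finite and deduce that the closure hypothesis must fail, by exhibiting a prime lying under no element of $A$ that nevertheless divides $\prod_{a \in B} a - \varepsilon_0$ for some $B \in \mathcal P_\star(A)$. First I would let $p_1 < \cdots < p_n$ be the finitely many distinct primes dividing $\prod_{a \in A} a$, and record the elementary translation of the hypothesis: since every element of $A$ is a prime power, for a prime $q$ the relation $q \perp A$ holds if and only if $q \in \{p_1, \ldots, p_n\}$. So the hypothesis reads: every prime dividing some $\prod_{a \in B} a - \varepsilon_0$, with $B$ a nonempty proper subset of $A$, is one of the $p_i$.

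Next I would establish $n \ge 3$, so as to be able to invoke Theorem \ref{th:1}. The cases $n \le 2$ get ruled out by hand. If $n = 2$, pick $u, u' \in A$ that are powers of $p_1$ and $p_2$ respectively; then $\{u, u'\} \in \mathcal P_\star(A)$ (it is proper because $|A| \ge 3$), while $u u' - \varepsilon_0$ is coprime to both $p_1$ and $p_2$ and exceeds $1$, hence has a prime factor outside $\{p_1, p_2\}$, contradicting the hypothesis. If $n = 1$, a single prime-power subset forces $p_1^{e} - \varepsilon_0 = 1$ (its value being coprime to $p_1$ yet supported on $\{p_1\}$), whence $\varepsilon_0 = 1$ and $2 \in A$; taking a two-element subset then produces an odd prime factor of a number of the form $2^{k} - 1$ with $k \ge 3$, again a contradiction.

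With $n \ge 3$ in hand I would reduce to Theorem \ref{th:1}. The one delicate point is that the prime powers in $A$ need not have pairwise distinct bases, whereas Theorem \ref{th:1} is phrased for distinct primes. To bridge this, choose a subset $A^* = \{p_1^{v_1}, \ldots, p_n^{v_n}\} \subseteq A$ containing exactly one power of each $p_i$; then every nonempty $B \subseteq A^*$ indexed by a proper $I \subsetneq S_n$ is a proper subset of $A$, whether $A^* = A$ or $A^* \subsetneq A$ (in the latter case all subsets of $A^*$ are already proper in $A$). Applying Theorem \ref{th:1} to the primes $p_1, \ldots, p_n$, the exponents $v_1, \ldots, v_n$, the family $\mathcal D := \mathcal P_\star(S_n) \supseteq \mathcal D_0$, and the constant map $\varepsilon \equiv \varepsilon_0$ yields a prime $q \in \mathbb P \setminus \{p_1, \ldots, p_n\}$ dividing $\prod_{i \in I} p_i^{v_i} - \varepsilon_0 = \prod_{a \in B_I} a - \varepsilon_0$ for some proper nonempty $I$, where $B_I := \{p_i^{v_i} : i \in I\} \in \mathcal P_\star(A)$. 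Since $q \notin \{p_1, \ldots, p_n\}$, we get $q \not\perp A$, contradicting the hypothesis and forcing $A$ to be infinite.

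I expect the main obstacle to be precisely this last reduction: ensuring that the subsets of $A$ fed into the closure property are genuinely proper, and handling the possibility that distinct elements of $A$ are powers of the same prime, which would otherwise break the dictionary between subsets of $A$ and subsets of $S_n$ that underlies Theorem \ref{th:1}. The $A^*$ device together with the separate disposal of $n \le 2$ is what makes Theorem \ref{th:1} applicable here.
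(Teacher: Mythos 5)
Your proposal is correct, and it follows the paper's overall strategy (contradiction plus an application of Theorem \ref{th:1} with $\mathcal D = \mathcal P_\star(S_n)$ and $\varepsilon \equiv \varepsilon_0$), but it resolves the one genuine subtlety --- that distinct elements of $A$ might be powers of the same prime --- in a different way. The paper sets $n := |A| \ge 3$, writes $A = \{p_1^{v_1}, \ldots, p_n^{v_n}\}$ with $p_1 \le \cdots \le p_n$, and then uses the closure hypothesis itself to show the bases must be distinct: if $p_{i_1} = p_{i_2}$ with $i_1 \ne i_2$, then $\prod_{i \in S_n \setminus \{i_1\}} p_i^{v_i} - \varepsilon_0$ is coprime to every prime of $\mathfrak P$ yet (by the hypothesis and unique factorization) must be a positive power of $p_{i_1}$, which fails modulo $p_{i_1}$. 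You instead let $n$ count the distinct prime bases, which forces you to dispose of $n \le 2$ by hand (your singleton and two-element arguments do this correctly, using $|A| \ge 3$ for properness), and then you pass to a transversal $A^*$ containing one power of each prime, noting that proper subsets of $A^*$ are automatically proper in $A$. The paper's route is shorter because $n \ge 3$ comes for free from $|A| \ge 3$, and the repeated-base case is killed by a single congruence; your route buys a cleaner separation of concerns --- it never needs to prove that $A$ has distinct bases, only that at least three distinct primes occur --- at the cost of the extra small-$n$ case analysis. Both are complete proofs.
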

\begin{proof}
Suppose to a contradiction that $A$ is finite and let $n := |A|$.

Since $A$ is a
set of prime powers, there then exist $p_1, \ldots, p_n \in
\mathbb P$ and $v_1, \ldots, v_n \in \mathbb N^+$ such that $p_1 \le \cdots \le p_n$ and $A = \{p_1^{v_1}, \ldots, p_n^{v_n}\}$, and
our assumptions give that
\begin{equation}
\label{equ:riduzione}
\text{``}q \text{ divides }\textstyle \prod_{i \in I}
p_i^{v_i} -  \varepsilon_0\text{ for some }I \in \mathcal P_\star(S_n)\text{ only if } q \in \mathfrak P\text{''},
\end{equation}
where $\mathfrak P := \{p_1, \ldots,
p_n\}$ for brevity's sake.

This clearly implies that $p_1 < \cdots < p_n$. In fact, if $p_{i_1} =
p_{i_2}$ for distinct $i_1,i_2 \in S_n$, then it is found from
\eqref{equ:riduzione} and unique factorization that
\begin{displaymath}
 p_{i_1}^k = \prod_{i \in S_n \setminus \{i_1\}} p_i^{v_i} - \varepsilon_0
\end{displaymath}
for a certain $k \in \mathbb N^+$, which is impossible when reduced modulo
$p_{i_1}$.

Thus, using that $n \ge 3$, it follows from Theorem \ref{th:1} that
there also exists $q \in \mathbb P \setminus \mathfrak P$ such that $q$ divides
$\prod_{i \in I} p_i^{v_i} - \varepsilon_0$ for some $I \in \mathcal
P_\star(S_n)$. This is, however, in contradiction to \eqref{equ:riduzione},
and the proof is complete.
\end{proof}
\begin{lemma}\label{lem:eps0=1final}
If $\varepsilon_0 = 1$, then $\mathbb P \perp A$. In particular, $A = \mathbb P$
if $A \subseteq \mathbb P$.
\end{lemma}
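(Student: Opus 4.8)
The plan is to reduce everything to the single claim that $\mathbb P \perp A$, since the last assertion then follows at once: if $A \subseteq \mathbb P$ then each $a \in A$ is itself a prime, so $p \perp A$ forces $p$ to equal some element of $A$, i.e.\ $p \in A$; hence $\mathbb P \perp A$ yields $\mathbb P \subseteq A$, and combined with $A \subseteq \mathbb P$ this gives $A = \mathbb P$. Throughout, I would write $\mathfrak P := \{p \in \mathbb P : p \perp A\}$ for the set of prime bases occurring among the (prime-power) elements of $A$, so that $\mathbb P \perp A$ is precisely the statement $\mathfrak P = \mathbb P$; and I note that, with $\varepsilon_0 = 1$, the standing hypothesis of Theorem \ref{th:2} reads: every prime dividing $\prod_{a \in B} a - 1$ for some $B \in \mathcal P_\star(A)$ lies in $\mathfrak P$.

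To prove $\mathfrak P = \mathbb P$ I would fix a prime $q$ and argue by contradiction, assuming $q \notin \mathfrak P$. Then $q$ divides no element of $A$, so reduction modulo $q$ sends $A$ into the multiplicative group $(\mathbb Z/q\mathbb Z)^\times$, which is finite of order $q-1$. By Lemma \ref{lem:oo} the set $A$ is infinite, so by the pigeonhole principle some residue $r \in (\mathbb Z/q\mathbb Z)^\times$ is the reduction of infinitely many elements of $A$. Letting $m$ be the (multiplicative) order of $r$ in $(\mathbb Z/q\mathbb Z)^\times$, a positive integer not exceeding $q-1$, I would pick distinct $a_1, \ldots, a_m \in A$ each congruent to $r$ modulo $q$ and set $B := \{a_1, \ldots, a_m\}$; this is a finite nonempty proper subset of $A$ (proper precisely because $A$ is infinite), and
\[
\prod_{a \in B} a \equiv r^m \equiv 1 \bmod q ,
\]
so $q \mid \prod_{a \in B} a - 1$. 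The standing hypothesis then forces $q \in \mathfrak P$, contradicting the choice of $q$. Hence no prime lies outside $\mathfrak P$, that is $\mathbb P \perp A$.

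The argument is short, and its only real content is the combination of the infinitude of $A$ (Lemma \ref{lem:oo}) with a counting argument in the finite group $(\mathbb Z/q\mathbb Z)^\times$: once enough elements of $A$ share a common residue $r$, taking exactly $m$ of them produces a subset whose product is $\equiv 1 \bmod q$. The one point to handle with care — and the place where infinitude is indispensable — is that $B$ must be a \emph{proper} subset of $A$, so that it qualifies as a member of $\mathcal P_\star(A)$; and one should check that $\prod_{a \in B} a - 1$ is a genuine positive multiple of $q$ (indeed $\prod_{a \in B} a \ge 2$, whence $\prod_{a \in B} a - 1 \ge q$), so that $q$ really occurs as one of its prime divisors. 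No other step presents any difficulty.
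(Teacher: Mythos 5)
Your proof is correct and follows essentially the same route as the paper's: assume a prime $q$ divides no element of $A$, use Lemma \ref{lem:oo} and the pigeonhole principle to find an infinite residue class of $A$ modulo $q$, and then extract a finite subset whose product is $\equiv 1 \bmod q$, contradicting the hypothesis of Theorem \ref{th:2}. The only (immaterial) difference is that you take exactly $\mathrm{ord}_q(r)$ elements of the class, while the paper takes a multiple of $q-1$ of them and invokes Fermat's little theorem.
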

\begin{proof}
Suppose for the sake of contradiction that there exists $p \in \mathbb P$ such
that $p$ does not divide any element of $A$.

Since $|A| = \infty$ (by Lemma \ref{lem:oo}),
this together with the pigeonhole
principle implies that, for
a certain $r \in S_{p-1}$, the set
$$
A_{r} := \{a \in A: a \equiv r \bmod p\}
$$
is infinite, and we have that
\begin{equation}
\label{equ:fermatl}
 \forall B \in \mathcal P_\star(A_{r}): \prod_{a \in
B} a \equiv  \prod_{a \in B} r \equiv r^{|B|} \bmod p.
\end{equation}
As it is now possible to choose $B_0 \in \mathcal P_\star(A_{r})$ in such a way
that $|B_0|$ is a multiple of $p-1$, one gets from \eqref{equ:fermatl} and
Fermat's little theorem
that
$p$ divides a number of the form
$\prod_{a \in B} a - 1$ for some $B \in \mathcal P_\star(A)$, and hence $p \mid
a_0$ for some $a_0 \in A$ (by the assumptions of Theorem \ref{th:2}).

This is, however, absurd, because by construction
no element of $A$ is divisible by $p$. It follows that $\mathbb P \perp A$, and the rest is
trivial.
\end{proof}
In the next lemma,
we let an empty sum be equal to $0$ and an empty product be equal to $1$,
as usual.
\begin{lemma}
\label{lem:eps0=-1final}
If $\varepsilon_0 = -1$ and $A \subseteq \mathbb P$, then $A = \mathbb P$.
\end{lemma}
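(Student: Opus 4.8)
The plan is to reduce the claim to a single modular statement and then attack it with Wilson's theorem together with a pigeonhole argument on residues. First I would note that, since $A \subseteq \mathbb P$, the relation $q \perp A$ for a prime $q$ simply means $q \in A$; thus the standing hypothesis (with $\varepsilon_0 = -1$) says precisely that every prime divisor of $\prod_{p \in B} p + 1$, for $B \in \mathcal P_\star(A)$, lies in $A$, and by Lemma \ref{lem:oo} the set $A$ is infinite. A one-line observation gives $2 \in A$ (any odd $p \in A$ makes $p+1$ even). Assuming, for contradiction, that some prime $q \notin A$ — necessarily odd — it then suffices to exhibit $B \in \mathcal P_\star(A)$ with $\prod_{p \in B} p \equiv -1 \bmod q$, because then $q \mid \prod_{p\in B} p + 1$ forces $q \in A$, against the choice of $q$.

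I would work in the cyclic group $G := (\mathbb Z/q\mathbb Z)^\times$, whose order $q-1$ is even and in which $-1$ is the unique element of order $2$. The target residue $-1$ is produced cleanly if $A$ meets every nonzero class modulo $q$: selecting one prime from each class yields, by Wilson's theorem, a product congruent to $(q-1)! \equiv -1 \bmod q$. Short of that, a pigeonhole argument supplies a residue $r$ for which $A_r := \{p \in A : p \equiv r \bmod q\}$ is infinite, and then every element of $\langle r \rangle$ arises as $\prod_{p\in B} p \bmod q$ for a suitable $B$ consisting of distinct primes drawn from $A_r$. Hence if $\mathrm{ord}_q(r)$ is even I am done immediately, taking $|B| = \mathrm{ord}_q(r)/2$ so that $\prod_{p \in B} p \equiv r^{\mathrm{ord}_q(r)/2} = -1$; more generally, the argument closes whenever some infinitely occurring residue has even order.

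The hard case is when every residue occurring infinitely often has odd order, i.e.\ lies in the subgroup $O \le G$ of odd-order elements (all of which are quadratic residues); since there are only $q-1$ classes, this forces all but finitely many primes of $A$ into $O$, leaving only finitely many primes of even order. Here I would use the closure hypothesis to manufacture the missing structure: choosing a realizable subset product $c \in O$ for which $c+1$ has even order modulo $q$ (possible since $O+1 \not\subseteq O \cup \{0\}$, a condition one verifies by a short count for large $q$ and directly for small $q$), and using that $O$ is a subgroup, so a product of odd-order residues remains in $O$, I conclude that $\prod_{p\in B} p + 1$ must have a prime factor of even order, which by closure belongs to $A$. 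It then remains to combine such even-order primes with the free powers coming from $A_r$ so as to hit exactly $-1$. This is where I expect the main obstacle: because each prime of $A$ may be used only once, realizing $-1$ is a multiplicity-constrained (squarefree) representation problem, which I would recast as a subset-sum question in the $2$-group $G/O \cong \mathbb Z/2^s\mathbb Z$ — showing that the images of the available even-order primes (enlarged, if necessary, by further applications of the closure step) generate a subgroup containing the image of $-1$ and that some \emph{distinct}-prime subset actually attains it, via an Olson/Davenport-type argument. Controlling these multiplicities, rather than the underlying group theory, is the crux of the proof.
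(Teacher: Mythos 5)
Your reduction and the two easy cases are sound: since $A \subseteq \mathbb P$, the hypothesis does say that every prime factor of $\prod_{p\in B}p+1$ lies in $A$; Lemma \ref{lem:oo} gives infinitude; pigeonhole gives a residue $r$ with $A_r$ infinite; and if $\mathrm{ord}_q(r)$ is even, a product of $\mathrm{ord}_q(r)/2$ distinct primes of $A_r$ is indeed $\equiv -1 \bmod q$. But the case you yourself call hard --- every residue occurring infinitely often in $A$ has odd order mod $q$ --- is not proved, and this is a genuine gap, not a finishing detail. Two concrete problems. First, your existence step rests on the counting claim $O+1 \not\subseteq O \cup \{0\}$ for the subgroup $O$ of odd-order residues, but the products you can actually realize from the infinite classes lie only in the subgroup $H$ generated by the infinitely-occurring residues, which may be far smaller than $O$ --- even trivial. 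For example, if all but finitely many elements of $A$ were $\equiv 1 \bmod 7$, the only residue you can freely realize is $c=1$, and $c+1=2$ has odd order mod $7$ (it lies in $O=\{1,2,4\}$), so no even-order prime factor is forced at this step; you would have to iterate the closure argument, and what new primes (and new residues) it produces is not under your control. Second, even granting a supply of even-order primes of $A$, they are finitely many and each usable at most once, so hitting the image of $-1$ in $G/O$ is a subset-product problem over an uncontrolled finite multiset; you invoke an Olson/Davenport-type argument but do not give one, and nothing in the hypotheses ensures such a theorem applies, nor that the residual correction inside $O$ lands in the (possibly small) subgroup $H$. Your closing sentence concedes exactly this: the crux is missing.

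For contrast, the paper's proof avoids any case distinction on orders. Writing $\xi_0$ for the product of all elements of $A$ lying in finite residue classes mod $p$ and picking $a_0$ in an infinite class, it sets $\varrho_0 := a_0\xi_0$ and recursively defines $\varrho_{n+1} := \varrho_0 \cdot a_1 \cdots a_h$, where the $a_i$ are distinct primes of $A$, larger than $\varrho_0$, chosen in the same residue classes as the prime factors of $1+\varrho_n$ (counted with multiplicity); those prime factors all lie in infinite classes precisely because $\xi_0 \mid \varrho_n$, which is what makes the multiplicity-free replacement possible. This yields $1+\varrho_n \equiv \sum_{i=0}^{n+1}\varrho_0^i \bmod p$, and taking $n = p(p-1)-2$ forces $\sum_{i=0}^{p(p-1)-1}\varrho_0^i \equiv 0 \bmod p$ by Fermat's little theorem, whatever the order of $\varrho_0$ is. That single identity is what replaces, and circumvents, your entire hard case; without something playing its role, your proof is incomplete.
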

\begin{proof}
Suppose to a contradiction that there exists $p \in
\mathbb{P}$ such that $p \notin A$, and for each $r \in S_{p-1}$ let $A_r := \{a
\in A: a \equiv r \bmod p\}$. Then,
\begin{equation}
\label{equ:unionAi}
A = A_1 \cup \cdots \cup A_{p-1}.
\end{equation}
In addition to this, set $
\Gamma_{\rm fin} := \{r \in S_{p-1}: |A_r| < \infty \}$ and $\Gamma_{\rm inf}
:= S_{p-1} \setminus \Gamma_{\rm fin}$, and take
\begin{displaymath}
A_{\rm fin} := \bigcup_{r \in \Gamma_{\rm fin}} A_r
\quad\text{and}\quad A_{\rm inf} := A \setminus A_{\rm fin}.
\end{displaymath}
It is clear from \eqref{equ:unionAi} that $A_{\rm inf}$ is infinite, because
$A_{\rm fin}$ is finite, $\{A_{\rm fin}, A_{\rm inf}\}$ is a partition of $A$,
and $|A| = \infty$ by Lemma \ref{lem:oo}. So, we let $\xi_0 := \prod_{a \in
A_{\rm fin}} a$.

We claim that there exists a sequence $\varrho_0, \varrho_1, \ldots$ of
positive integers such that $\varrho_n$ is, for each $n \in \mathbb N$,  a
nonempty product (of a finite number) of distinct elements of $A$
with the property that
\begin{equation}
\label{equ:recursion}
 \xi_0 \mid \varrho_n \quad \text{and}\quad 1 + \varrho_n \equiv
\sum_{i=0}^{n+1} \varrho_0^i \bmod p.
\end{equation}
\begin{proof}[Proof of the claim] We construct the sequence $\varrho_0,
\varrho_1, \ldots$ in a recursive way. To start with, pick an arbitrary $a_0 \in
A_{\rm inf}$ and define $\varrho_0 := a_0 \xi_0$,
where the factor $a_0$ accounts for the possibility that $\Gamma_{\rm fin} =
\emptyset$.

By construction, $\varrho_0$ is a nonempty product of
distinct elements of $A$, and \eqref{equ:recursion} is satisfied in the base case $n = 0$.

Now fix $n \in \mathbb N$ and suppose that we have already found
$\varrho_n \in \mathbb N^+$ such that $\varrho_n$ is a product of
distinct elements of $A$ and \eqref{equ:recursion} holds true with $\varrho_0$ and $\varrho_n$. By unique factorization,
there then exist exponents $s_1,
\ldots, s_k \in \mathbb N^+$ and distinct primes
$p_1, \ldots, p_k \in \mathbb P$ ($k \in \mathbb N^+$) such that
\begin{equation}
\label{equ:factor_rho0}
 \xi_0 \mid  \varrho_n \quad \text{and}
\quad 1 + \varrho_n = \prod_{i=1}^k p_i^{s_i}.
\end{equation}
Therefore, we get from the assumptions on $A$ that $p_i \perp A$ for each $i \in S_k$, which in turn implies that $p_i \in A$ (since $A \subseteq \mathbb P$ by hypothesis), and actually $p_i
\in A_{\rm inf}$, considering that every element of $A_{\rm fin}$, if any exists, is a
divisor of $\xi_0$, and $\xi_0 \mid \varrho_n$ by
\eqref{equ:factor_rho0}.

Using that $A_r$ is infinite for every $r \in \Gamma_{\rm inf}$ and $A_{\rm inf}
= \bigcup_{r \in \Gamma_{\rm inf}} A_r$, this yields that there exist
$a_1, \ldots, a_h \in A_{{\rm inf}}$ such that,
on the one hand,
\begin{equation}
\label{equ:distinct_all}
\varrho_0 < a_1 < \cdots < a_h,
\end{equation}
and on the other hand,
\begin{equation}
\label{equ:long_chains}
p_i \equiv a_{1+t_i} \equiv  \cdots
\equiv a_{s_i+t_i} \bmod p
\end{equation}
for every $i \in S_k$, where we define $h := \sum_{i=1}^k s_i$ and $t_i := \sum_{j=1}^{i-1}
s_j$. It follows from \eqref{equ:factor_rho0} and
\eqref{equ:long_chains} that
\begin{displaymath}
 1 + \varrho_n \equiv \prod_{i=1}^k p_i^{s_i} \equiv
\prod_{i=1}^h a_i \bmod p.
\end{displaymath}
So, by the assumptions on $\varrho_n$ and the above considerations, we
see that
\begin{displaymath}
1 + \varrho_0 \cdot \prod_{i=1}^h a_i \equiv 1 + \varrho_0 \cdot (1 + \varrho_n) \equiv 1 + \varrho_0
\cdot \sum_{i=0}^{n+1} \varrho_0^{i} \equiv \sum_{i=0}^{n+2}
\varrho_0^{i} \bmod p.
\end{displaymath}
Our claim is hence proved (by induction) by taking $\varrho_{n+1} :=
\varrho_0 \cdot \prod_{i=1}^h a_i$, because $\xi_0 \mid \varrho_0 \mid \varrho_{n+1}$ and $\varrho_{n+1}$ is, by virtue of
\eqref{equ:distinct_all}, a nonempty product of distinct elements of $A$.
\end{proof}
Thus, letting $n = p(p-1)-2$ in \eqref{equ:recursion} and observing that $p
\nmid \varrho_0$ (since $p\notin A$ and $\varrho_0$ is, by construction, a product
of elements of $A$) give that $1 + \varrho_n \equiv 0 \bmod p$, with the result
that $p \in A$ by the assumed properties of $A$. This is, however, a
contradiction, and the proof is complete.
\end{proof}

Finally, we have all the ingredients for the following:

\begin{proof}[Proof of Theorem \ref{th:2}]
Just put together Lemmas \ref{lem:oo}, \ref{lem:eps0=1final} and \ref{lem:eps0=-1final}.
\end{proof}

We conclude the section with a couple of examples, the first of which provides evidence of a substantial difference between Lemmas \ref{lem:eps0=1final} and
\ref{lem:eps0=-1final}, and is potentially of interest in relation to Question
\ref{quest3}.
\begin{example}
\label{rem:2}
Given $\ell \in \mathbb N^+$ and odd primes $q_1, \ldots, q_\ell$, let
$$
k := {\rm lcm}(q_1 - 1, \ldots, q_\ell - 1)
$$
and
$$
A := \{p^{nk}: p \in \mathbb P \setminus \mathcal Q, n \in \mathbb N^+\},
$$
where $\mathcal Q := \{q_1, \ldots, q_\ell\}$.
We denote by $\mathfrak P$ the set of all
primes $q$ for which there exists $B \in \mathcal P_\star(A)$ such that $q$
divides $\prod_{a \in B} a + 1$.

It is then easily seen that $\mathfrak P
\subseteq \mathbb P \setminus \mathcal Q$, since for every $B \in \mathcal P_\star(A)$ and each $i = 1, \ldots, \ell$ Fermat's little theorem gives
$\prod_{a \in B} a + 1 \equiv 2 \not\equiv 0 \bmod q_i$.

On the other hand, the very definition of $A$ yields that $q \perp A$, for some $q \in \mathbb P$, if and only if $q \notin \mathcal Q$.
\end{example}

The example above shows that, given a finite nonempty $\mathcal Q \subseteq \mathbb P$, there exists a set $A$ of prime powers such that the set of primes dividing at least one number of the form $\prod_{a \in B} a + 1$ for some $B \in \mathcal P_\star(A)$ is contained in $\mathbb P \setminus \mathcal Q$, and Question \ref{quest3} asks if this inclusion can be actually made into an equality for a suitable $A$.

The next example, on the other hand, may be of interest in relation to Question \ref{quest3b}.
\begin{example}\label{rem:1}
For $\ell \in \mathbb N^+$ pick distinct primes $q_1, \ldots, q_\ell \ge 3$
and, in view of \cite[Theorem 110]{Hardy08}, let $g_i$ be a primitive root modulo $q_i$.

A standard argument based on the Chinese remainder theorem shows that there also exists an integer $g$
such that $g$ is a primitive root modulo $q_i$ for each $i$, and by Dirichlet's theorem on arithmetic progressions we can choose
$g$ to be prime. Now, fix $\varepsilon_0 \in \{\pm 1\}$ and define
\begin{displaymath}
A :=
\left\{
\begin{array}{ll}
\bigcup_{i=1}^\ell \{g^{(q_i-1)n}: n \in \mathbb N^+\} & \text{if }\varepsilon_0
= 1 \\
   & \\
\bigcup_{i=1}^\ell \{g^{\frac{1}{2}(q_i-1)(2n - 1)}: n \in \mathbb N^+\} &
\text{if }\varepsilon_0 = -1
\end{array}
\right..
\end{displaymath}
If $\mathfrak P$ is the set of all primes $q$ such that $q$ divides $\prod_{a
\in B} a - \varepsilon_0$ for some $B \in \mathcal P_\star(A)$, then on the one
hand, $q_i \in \mathfrak P$ for each $i$ (essentially by construction),
and on the other hand, no element of $A$ is divided by $q_i$ (because $g$ and $q_i$ are coprime).
\end{example}

\section{Closing remarks}\label{sec:final}
Many ``natural" questions related to the ones already stated in
the previous sections arise, and perhaps it can be interesting
to find them an answer.

Here are some examples: Is it possible to prove Theorem \ref{th:1} under the
weaker assumption that $\mathcal D_0$, as there defined, is
$\mathcal P_1(S_n) \cup \mathcal P_{n-1}(S_n)$ instead of
$\mathcal P_1(S_n) \cup \mathcal P_{n-2}(S_n) \cup \mathcal
P_{n-1}(S_n)$? This is clearly the case if $n = 3$, but what
about $n \ge 4$? And what if $n$ is sufficiently large and
$\mathcal D_0 = \mathcal P_k(S_n)$ for some $k \in S_n$? The
answer to the latter question is negative for $k = 1$ (for, take $p_1, \ldots, p_n$
to be the $n$ smallest primes and let $v_1 = \cdots = v_n =
\varepsilon_0 = 1$, then observe that, for each $i \in S_n$, the
greatest prime divisor of $p_i^{v_i} - \varepsilon_0$ is
$\le p_i - 1$). But what if $k \ge 2$?

In addition to the above: To what
degree can the results of Section \ref{sec:preparations} be
extended in the direction of Question \ref{quest2}? It seems worth mentioning in this respect that Question \ref{quest2} has
the following ``abstract'' formulation  (we refer to \cite[Ch. 1]{Mollin11} for background on divisibility and
related topics in the general theory of rings):
\begin{question}
\label{quest4}
Given an integral domain $\mathbb F$ and an integer $n \ge 3$, pick
pairwise coprime non-units $u_1, \ldots, u_n \in \mathbb F$ (assuming that
this is actually possible), and let $\mathcal D$ be a nonempty subfamily of
$\mathcal P_\star(S_n)$ with ``enough'' elements. Does there exist at least one
irreducible $q \in \mathbb F$ such that $q$ divides $\prod_{i \in I}
u_i - 1$ for some $I \in \mathcal D$ and $q \nmid u_1 \cdots u_n$?
\end{question}
In the above, the condition that $u_1, \ldots, u_n$ are non-units is necessary
to ensure that $\prod_{i \in I} u_i -
1 \ne 0$ for each $I \in \mathcal D$ (otherwise the question would be, in a certain sense, trivial).

In fact, one may want to assume that $\mathbb F$ is a UFD, in such a way that
an
element is irreducible if and only if it is prime \cite[Theorems 1.1 and
1.2]{Mollin11}. In particular, it seems interesting to try to answer Question
\ref{quest4} in the special case where $\mathbb F$ is the ring of
integers of a quadratic extension of $\mathbb Q$ with the property of unique
factorization, and $u_1, \ldots, u_n$ are primes in $\mathbb F$. Hopefully, this will be the subject of future work.

\section*{Acknowledgments}
We are grateful to Carlo Pagano
(Universit\`a di Roma Tor Vergata) for having suggested the key idea
used in the proof of Lemma \ref{lem:eps0=-1final},
to Alain Plagne (\'Ecole Polytechnique) for
remarks that improved the readability of the paper, and to anonymous referees for helpful comments.

The second named author was supported, during the preparation of the manuscript, by the
European Community's 7th Framework Programme (FP7/2007-2013) under Grant
Agreement No. 276487 (project ApProCEM), and partly from the ANR Project No. ANR-12-BS01-0011 (project CAESAR).

\end{document}